\documentclass[11pt,reqno]{amsart}

\usepackage[T1]{fontenc}
\usepackage{lmodern}
\usepackage{microtype}
\usepackage{amsmath,amssymb,mathtools,amscd}
\usepackage{enumitem}
\usepackage{array}
\usepackage{booktabs}
\usepackage{aliascnt}
\usepackage{hyperref}
\usepackage[nameinlink,capitalize,noabbrev]{cleveref}

\hypersetup{
  colorlinks=true,
  linkcolor=blue,
  citecolor=blue,
  urlcolor=blue,
  pdftitle={Flat Projective Dimensions and n-Perfectness under Trivial and Split Nilpotent Extensions: A Corrected and Audited Version},
  pdfauthor={Samir Boutghouchout, Hwankoo Kim, Najib Mahdou}
}

\setlist[itemize]{leftmargin=2em}
\setlist[enumerate]{leftmargin=2.5em}

\newcommand{\pd}{\operatorname{pd}}
\newcommand{\fd}{\operatorname{fd}}
\newcommand{\cd}{\operatorname{cd}}
\newcommand{\cotD}{\operatorname{cot.D}}
\newcommand{\Tor}{\operatorname{Tor}}
\newcommand{\Ext}{\operatorname{Ext}}
\newcommand{\Spec}{\operatorname{Spec}}
\newcommand{\Mod}{\operatorname{Mod}}

\newcommand{\Coker}{\operatorname{Coker}}
\newcommand{\Ker}{\operatorname{Ker}}
\newcommand{\splf}{\operatorname{splf}}

\newcommand{\FlatPD}{\operatorname{FlatPD}}
\newcommand{\id}{\operatorname{id}}

\theoremstyle{plain}
\newtheorem{theorem}{Theorem}[section]
\newaliascnt{proposition}{theorem}
\newtheorem{proposition}[proposition]{Proposition}
\aliascntresetthe{proposition}
\newaliascnt{lemma}{theorem}
\newtheorem{lemma}[lemma]{Lemma}
\aliascntresetthe{lemma}
\newaliascnt{corollary}{theorem}
\newtheorem{corollary}[corollary]{Corollary}
\aliascntresetthe{corollary}

\theoremstyle{definition}
\newaliascnt{definition}{theorem}
\newtheorem{definition}[definition]{Definition}
\aliascntresetthe{definition}
\newaliascnt{example}{theorem}
\newtheorem{example}[example]{Example}
\aliascntresetthe{example}

\theoremstyle{remark}
\newaliascnt{remark}{theorem}
\newtheorem{remark}[remark]{Remark}
\aliascntresetthe{remark}
\newaliascnt{question}{theorem}

\aliascntresetthe{question}

\title[On the $n$-Perfectness under Split Nilpotent Extensions]
{On the $n$-Perfectness under Split Nilpotent Extensions}

\author{Wei Qi}
\address{School of Mathematics and Statistics, Tianshui Normal University, Tianshui 741001, China}
\email{qwrghj@126.com}

\author{Xiaolei Zhang}
\address{School of Mathematics and Statistics, Tianshui Normal University, Tianshui 741001, China}
\email{zxlrghj@163.com}

\subjclass[2010]{13D05, 16E10}
\keywords{$n$-perfect ring, cotorsion dimension,  idealization, split-by-nilpotent extension, projective dimension, flat module}

\begin{document}
\raggedbottom

\begin{abstract}
Let $\pi:S\twoheadrightarrow R$ be a split ring epimorphism with nilpotent kernel
$J$, and let $R\hookrightarrow S$ be a fixed ring-theoretic section.  We prove a
change-of-rings formula which is considerably stronger than an equality of global
invariants: if $X$ is a left $S$-module satisfying
$\Tor_i^S(R,X)=0$ for every $i>0$, then
\[
   \pd_S X=\pd_R(R\otimes_S X)
\]
in $\mathbb N\cup\{0,\infty\}$.  In particular, for every flat left
$S$-module $F$,
\[
   \pd_SF=\pd_R(F/JF),
\]
and $F/JF$ is flat over $R$.  Reduction and induction of scalars therefore show that the same numerical set of projective dimensions is attained by flat modules over $S$ and over $R$.  Consequently, left $n$-perfectness is invariant under every
split-by-nilpotent extension.  The right-handed analogue holds as well.

For a commutative ring $R$ and an arbitrary $R$-module $M$, this applies to the
idealization $S=R\ltimes M$ without any projectivity, flatness, finite generation,
or finite flat-dimension assumption on $M$.  We obtain a list of equivalent
characterizations of the condition that $R\ltimes M$ is $n$-perfect, together with
\[
   \cotD(R\ltimes M)=\cotD(R).
\]
We further derive exact preservation of the flat-projective-dimension spectrum, strict $n$-perfectness, pointwise localization, arbitrary iterated idealizations, $n$-trivial extensions, truncated polynomial rings, and upper triangular matrix rings.
\end{abstract}

\maketitle
\enlargethispage{4pt}

\section{Introduction}

A ring $A$ is called left $n$-perfect if every flat left $A$-module has
projective dimension at most $n$.  In the commutative setting this notion is
closely tied to cotorsion dimension: the global cotorsion dimension is the supremum
of the projective dimensions of flat modules.  The modern theory of flat cotorsion
modules was developed in particular by Enochs \cite{Enochs1984}, while cotorsion
dimension was studied systematically by Ding--Mao and by Bennis--Mahdou; see
\cite{MaoDing2006,BennisMahdou2009,BennisMahdouGor2009}.  For $n=0$ one recovers
Bass perfect rings, that is, rings over which every flat module is projective
\cite{Bass1960}.

Trivial ring extensions, also called idealizations, are among the most useful
split extensions in commutative and homological algebra.  Given a commutative ring
$R$ and an $R$-module $M$, the trivial extension
\[
   S=R\ltimes M
\]
has underlying abelian group $R\oplus M$ and multiplication
\[
   (r,m)(r',m')=(rr',rm'+r'm).
\]
The ideal $I=0\ltimes M$ satisfies $I^2=0$, the quotient $S/I$ is $R$, and the
quotient map has the canonical ring section $r\mapsto(r,0)$.  Trivial extensions
were studied in depth by Fossum, Griffith and Reiten
\cite{FossumGriffithReiten1975}; explicit homological-dimension formulae were
obtained by Palm\'er and Roos \cite{PalmerRoos1973}.  More recent structural and
categorical developments include \cite{AndersonEtAl2017,Beligiannis2000,
BenkhadraBennisGarcia2020}.  Split-by-nilpotent extensions also occur naturally in representation theory; see, for example, \cite{AssemMarmaridis1998,AssemZacharia2003}.  We use standard homological-algebra conventions as in
\cite{Rotman2009,Weibel1994}.

The projective idealizing case $R\ltimes P$, with $P$ projective over $R$, suggests
that cotorsion dimension and $n$-perfectness should be preserved.  The natural
question is whether projectivity can be dropped.  A first attempt might replace
projectivity by a bound $\fd_RM\le d$.  This, however, is not the correct level of
generality: the main theorem of the present paper shows that no homological
hypothesis on $M$ is needed at all.

There is a second reason to reorganize the proof.  The $R$-module decomposition
$S\cong R\oplus M$ is useful, but it is dangerous to treat decompositions obtained
from it as decompositions of $S$-modules.  For example,
\[
  \operatorname{Hom}_R(S,X)
  \cong X\oplus\operatorname{Hom}_R(M,X)
\]
is generally only an isomorphism of $R$-modules.  Applying $\Ext_S$ as though the
right side were an $S$-module direct sum can invalidate a change-of-rings argument.
Similarly, if $M$ is not flat, a flat $S$-module need not remain flat after
restriction to $R$.  These issues force one to identify the correct $R$-module
attached to a flat $S$-module.

The key object is not the restricted module but the reduction
\[
  R\otimes_SF\cong F/JF.
\]
In fact, we prove a theorem in the more general setting of a split-by-nilpotent
extension.  Let
\[
  \pi:S\twoheadrightarrow R,
  \qquad \iota:R\hookrightarrow S,
  \qquad \pi\iota=\id_R,
\]
and let $J=\ker\pi$ be nilpotent.  If an $S$-module $X$ is Tor-independent from
$R$ in the sense that
\[
  \Tor_i^S(R,X)=0\qquad(i>0),
\]
then
\[
  \pd_SX=\pd_R(R\otimes_SX).
\]
Flat $S$-modules automatically satisfy the Tor-vanishing condition.  This yields,
for every flat $S$-module $F$,
\[
  \pd_SF=\pd_R(F/JF),
\]
while $F/JF$ is flat over $R$.  Conversely, if $C$ is flat over $R$, then
$S\otimes_RC$ is flat over $S$ and has the same projective dimension as $C$.
Therefore the entire flat projective-dimension spectrum is preserved, not merely
its supremum.

For ordinary idealizations this gives the requested complete characterization:
for every $n\ge0$ and every $R$-module $M$,
\[
  R\ltimes M\text{ is }n\text{-perfect}
  \quad\Longleftrightarrow\quad
  R\text{ is }n\text{-perfect}.
\]
In the commutative case this is equivalent to
\[
  \cotD(R\ltimes M)=\cotD(R).
\]
The $n$-perfect equivalence for the broader class of $n$-trivial extensions was
obtained categorically by Benkhadra, Bennis and Garc\'ia Rozas
\cite[Theorem~4.2]{BenkhadraBennisGarcia2020}.  Our approach gives a modulewise strengthening in the present setting: it computes the projective dimension of every flat module, and more generally of every module satisfying the indicated Tor vanishing.  The proof is self-contained and applies to arbitrary ring retractions with nilpotent kernel.  We do not claim here that this level of generality is absent from all prior literature.

The paper is organized as follows.  Section~2 fixes conventions and elementary
facts about split nilpotent retractions.  Section~3 proves the main
change-of-rings formula.  Section~4 derives equality of flat projective-dimension
spectra and the general $n$-perfectness theorem.  Section~5 specializes to
commutative trivial extensions, gives the requested list of equivalent
characterizations, and proves cotorsion-dimension invariance.   Section~6 develops localization and iteration consequences.  Section~7
applies the general theorem to $n$-trivial extensions, truncated polynomial rings,
and triangular matrix rings.  Section~8 records examples and sharpness.

\section{Split-by-nilpotent extensions and preliminaries}

Unless commutativity is explicitly assumed, rings are associative with identity and
modules are unital left modules.  For a ring $A$ and an $A$-module $X$, the symbols
$\pd_AX$ and $\fd_AX$ denote projective and flat dimension, respectively, with
values in $\mathbb N\cup\{0,\infty\}$.

\begin{definition}\label{def:splitnil}
A ring $S$ is a \emph{split-by-nilpotent extension} of a ring $R$ if there are identity-preserving ring homomorphisms
\[
  R\xrightarrow{\ \iota\ }S\xrightarrow{\ \pi\ }R
\]
with $\pi\iota=\id_R$ and with $J:=\ker\pi$ nilpotent.  Thus $J^t=0$ for some
$t\ge1$.  Via $\iota$, we identify $R$ with a subring of $S$.
\end{definition}

As an $R$-$R$-bimodule one has $S\cong R\oplus J$, but this decomposition is not
asserted to split in the category of $S$-modules.  The quotient $R\cong S/J$ is
viewed as a right or left $S$-module through $\pi$ whenever tensor products or Tor
groups over $S$ are formed.

For a left $S$-module $X$, put
\[
   \overline X:=R\otimes_SX.
\]
There is a canonical isomorphism of left $R$-modules
\begin{equation}\label{eq:quotient}
   \overline X\cong X/JX.
\end{equation}

We repeatedly use the following nilpotent form of Nakayama's elementary argument.

\begin{lemma}[Nilpotent Nakayama argument]\label{lem:nil-nakayama}
Let $J$ be a nilpotent ideal of a ring $S$, and let $Y$ be an $S$-module.  If
$Y=JY$, then $Y=0$.
\end{lemma}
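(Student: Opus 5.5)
The plan is to iterate the hypothesis $Y=JY$ along the powers of $J$ until the nilpotency index is reached. Fix $t\ge1$ with $J^t=0$; such a $t$ exists because $J$ is nilpotent. I will prove by induction on $k\ge1$ that
\[
   Y=J^kY,
\]
where $J^kY$ denotes the additive subgroup of $Y$ generated by the products $a y$ with $a\in J^k$ and $y\in Y$. Setting $k=t$ then gives $Y=J^tY=0\cdot Y=0$, which is the claim.

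The base case $k=1$ is exactly the hypothesis. For the inductive step, assume $Y=J^kY$ and substitute $Y=JY$ into the right-hand side:
\[
   Y=J^kY=J^k(JY)=(J^kJ)Y=J^{k+1}Y .
\]
This uses only the module axiom $(ab)y=a(by)$ and additivity. Concretely, $J^k(JY)$ is generated by elements of the form $a\sum_i b_iy_i=\sum_i (ab_i)y_i$ with $a\in J^k$, $b_i\in J$ and $y_i\in Y$. Each such element lies in $J^{k+1}Y$, since $J^kJ=J^{k+1}$ as ideals (sums of products). Conversely, every generator $cy$ of $J^{k+1}Y$ has $c$ a finite sum of products $ab$ with $a\in J^k$ and $b\in J$, so $cy$ lies in $J^k(JY)$.

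There is no real obstacle here. The only point needing care is this identification $J^k(JY)=J^{k+1}Y$ of additive subgroups, which is the routine generator check just described. No commutativity or finiteness hypothesis on $Y$ or $S$ is used, so the argument applies verbatim to left modules over arbitrary rings, as is needed later in the paper.
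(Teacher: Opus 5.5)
Your proof is correct and is the same argument as the paper's: iterate $Y=JY$ to get $Y=J^kY$ for all $k$, then stop at the nilpotency index $t$, where $J^tY=0$. Your generator check of $J^k(JY)=J^{k+1}Y$ spells out a step the paper leaves implicit; only the inclusion $J^k(JY)\subseteq J^{k+1}Y$ is actually needed.
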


\begin{proof}
If $J^t=0$, then
\[
  Y=JY=J^2Y=\cdots=J^tY=0.
\]
No finite generation hypothesis on $Y$ is needed.
\end{proof}

\begin{lemma}[Induced flat modules]\label{lem:induced-flat}
Let $S$ be any ring containing a subring $R$ with the same identity.  If $C$ is a
flat left $R$-module, then $S\otimes_RC$ is flat as a left $S$-module.
If, in addition, $S\twoheadrightarrow R$ is a ring retraction of the inclusion,
then
\[
   R\otimes_S(S\otimes_RC)\cong C.
\]
If $C$ is projective over $R$, then $S\otimes_RC$ is projective over $S$.
\end{lemma}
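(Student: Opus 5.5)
The plan is to prove the three assertions of \cref{lem:induced-flat} separately. Each follows from associativity of the tensor product and the fact that extension of scalars preserves direct sums and summands.

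\emph{Flatness.} Let $0\to A'\to A$ be a monomorphism of right $S$-modules. For every right $S$-module $A$ there is a natural isomorphism
\[
  A\otimes_S(S\otimes_RC)\cong (A\otimes_SS)\otimes_RC\cong A_R\otimes_RC,
\]
where $A_R$ denotes $A$ regarded as a right $R$-module by restriction along the inclusion $R\subseteq S$. Restriction of scalars is exact, so $A'_R\to A_R$ is still a monomorphism. Flatness of $C$ over $R$ then makes $A'_R\otimes_RC\to A_R\otimes_RC$ injective. By naturality of the displayed isomorphism, $A'\otimes_S(S\otimes_RC)\to A\otimes_S(S\otimes_RC)$ is injective, so $S\otimes_RC$ is flat over $S$. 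The only point to check here is that the isomorphism is natural in $A$, which is standard.

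\emph{Reduction.} Suppose $\pi:S\twoheadrightarrow R$ satisfies $\pi\iota=\id_R$, and regard $R$ as a right $S$-module through $\pi$. Associativity gives
\[
  R\otimes_S(S\otimes_RC)\cong (R\otimes_SS)\otimes_RC\cong R'\otimes_RC,
\]
where $R'$ is $R$ viewed as a right $R$-module via the composite $R\xrightarrow{\iota}S\xrightarrow{\pi}R$. Since $\pi\iota=\id_R$, this composite is the identity, so $R'$ is the regular right module $R_R$. Hence $R'\otimes_RC\cong C$. Concretely, the isomorphism is given by $r\otimes(s\otimes c)\mapsto r\pi(s)c$. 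The retraction hypothesis is essential exactly at this step: without it, the $R$-action on $R\otimes_SS$ would be twisted.

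\emph{Projectivity.} Extension of scalars commutes with direct sums, since $S\otimes_R R^{(I)}\cong S^{(I)}$, and it sends direct summands to direct summands. So if $C\oplus C'\cong R^{(I)}$, then
\[
  (S\otimes_RC)\oplus(S\otimes_RC')\cong S^{(I)},
\]
and $S\otimes_RC$ is projective over $S$.

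None of the steps is a genuine obstacle. The one place needing care is the bookkeeping of module structures in the reduction step: one must use that $R\otimes_SS\cong R$ as right $R$-modules precisely because $\pi\iota=\id_R$.
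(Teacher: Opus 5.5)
Your proposal is correct and follows the paper's proof essentially step for step. You prove flatness via the natural isomorphism $N\otimes_S(S\otimes_RC)\cong N\otimes_RC$ with $N$ restricted to $R$, the reduction via associativity and $R\otimes_SS\cong R$ (which holds as right $R$-modules precisely because $\pi\iota=\id_R$, with explicit map $r\otimes(s\otimes c)\mapsto r\pi(s)c$), and projectivity by passing direct summands of free modules through $S\otimes_R-$, just as the paper does.
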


\begin{proof}
For every right $S$-module $N$ there is a natural isomorphism
\[
 N\otimes_S(S\otimes_RC)\cong N\otimes_RC,
\]
where $N$ is restricted to a right $R$-module.  Since $C$ is $R$-flat, the functor
on the right is exact in $N$, proving $S$-flatness.  Under a retraction,
associativity gives
\[
 R\otimes_S(S\otimes_RC)
 \cong (R\otimes_SS)\otimes_RC
 \cong C.
\]
Finally, if $C$ is a direct summand of a free $R$-module, then $S\otimes_RC$ is a
direct summand of the corresponding free $S$-module.
\end{proof}

\begin{lemma}[Reduction of a flat module]\label{lem:reduction-flat}
Let $\pi:S\twoheadrightarrow R$ be any ring epimorphism and let $F$ be a flat left
$S$-module.  Then $R\otimes_SF$ is flat as a left $R$-module.
\end{lemma}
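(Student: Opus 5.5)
This is base change of flatness, and I will prove it directly with the same associativity trick used in the proof of \cref{lem:induced-flat}. Here $R$ is regarded as an $R$-$S$-bimodule via $\pi$. The module $R\otimes_SF$ is a left $R$-module through the left action of $R$ on itself, which is compatible with the right $S$-action because $\pi$ is a ring homomorphism.

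The first step is to let $N$ be an arbitrary right $R$-module and view it as a right $S$-module by restriction along $\pi$. Associativity of the tensor product then gives a natural isomorphism of abelian groups
\[
  N\otimes_R(R\otimes_SF)\cong (N\otimes_RR)\otimes_SF\cong N\otimes_SF .
\]
This is natural in $N$.

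The second step is to take a short exact sequence $0\to N'\to N\to N''\to 0$ of right $R$-modules. Restriction of scalars along $\pi$ is exact, so the sequence remains a short exact sequence of right $S$-modules. Since $F$ is flat over $S$, applying $-\otimes_SF$ preserves exactness. By the naturality in the first step, $-\otimes_R(R\otimes_SF)$ is therefore exact on right $R$-modules, and so $R\otimes_SF$ is flat over $R$.

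The only point that needs care is checking that the associativity isomorphism is natural and respects the bimodule structures. This is standard, so there is no real obstacle. An alternative route is Lazard's theorem: $F$ is a direct limit of finitely generated free $S$-modules, $R\otimes_S-$ commutes with direct limits and sends free modules to free modules, so $R\otimes_SF$ is a direct limit of free $R$-modules and hence flat. I would present the associativity argument, since it is shorter and uses neither surjectivity nor nilpotence.
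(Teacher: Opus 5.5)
Your proof is correct and is essentially the paper's own argument. The paper likewise restricts an exact sequence of right $R$-modules along $\pi$, uses the natural isomorphism $A\otimes_R(R\otimes_SF)\cong A\otimes_SF$, and concludes from the $S$-flatness of $F$; your observation that only the ring homomorphism is used (not surjectivity or nilpotence) is also accurate.
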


\begin{proof}
Let $0\to A'\to A\to A''\to0$ be an exact sequence of right $R$-modules and view
it as a sequence of right $S$-modules via $\pi$.  For every right $R$-module $A$
there is a natural isomorphism
\[
 A\otimes_R(R\otimes_SF)\cong A\otimes_SF.
\]
The right-hand functor is exact because $F$ is $S$-flat.  Hence
$R\otimes_SF$ is $R$-flat.
\end{proof}

\begin{definition}\label{def:torind}
Let $S\twoheadrightarrow R$ be a ring epimorphism.  A left $S$-module $X$ will be
called \emph{$R$-Tor-independent} if
\[
   \Tor_i^S(R,X)=0\qquad\text{for all }i>0.
\]
Every flat left $S$-module is $R$-Tor-independent.
\end{definition}

\section{The change-of-rings formula}

We now establish the main technical theorem.  Its proof is elementary once the
nilpotent retraction is used in the correct direction, but each step is needed: the
ring section provides induction $S\otimes_R-$, while nilpotence turns reduction
modulo $J$ into a detection principle for surjectivity and injectivity.

\begin{lemma}[Projective reduction detects projectivity]\label{lem:base-projective}
Let $S$ be a split-by-nilpotent extension of $R$, with kernel $J$, and let $X$ be an
$R$-Tor-independent left $S$-module.  If
\[
   C:=R\otimes_SX
\]
is projective over $R$, then $X$ is projective over $S$.  More precisely,
\[
   X\cong S\otimes_RC.
\]
\end{lemma}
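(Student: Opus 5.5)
The plan is to build a comparison map $\varphi: S\otimes_R C\to X$ from the section $\iota$, and then use nilpotence to show that $\varphi$ is surjective and injective.

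\emph{Construction of $\varphi$.} Since $C=R\otimes_SX\cong X/JX$ by \eqref{eq:quotient} is projective over $R$, the quotient map $p:X\to X/JX\cong C$, regarded as a map of left $R$-modules via $\iota$, has an $R$-linear section $s:C\to X$. Extending scalars gives the $S$-linear map
\[
  \varphi:S\otimes_RC\to X,\qquad a\otimes c\mapsto a\,s(c).
\]
By \cref{lem:induced-flat}, $P:=S\otimes_RC$ is a projective $S$-module and $R\otimes_SP\cong C$. Under this identification, $R\otimes_S\varphi$ is the map $C\to X/JX$, $c\mapsto \overline{s(c)}$. Because $\pi\iota=\id_R$, the $R$-module structure on $X/JX$ coming through $\iota$ agrees with its natural structure, so this map is $p\circ s=\id_C$, an isomorphism.

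\emph{Surjectivity.} Put $Q=\Coker\varphi$. Right exactness of $R\otimes_S-$ gives $R\otimes_SQ=0$, that is, $Q=JQ$. Then \cref{lem:nil-nakayama} gives $Q=0$, so $\varphi$ is onto.

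\emph{Injectivity.} Put $K=\Ker\varphi$, so there is an exact sequence $0\to K\to P\to X\to0$. Applying $R\otimes_S-$ gives
\[
\Tor_1^S(R,X)\to R\otimes_SK\to R\otimes_SP\to R\otimes_SX\to0 .
\]
The left term vanishes by $R$-Tor-independence. The map $R\otimes_SP\to R\otimes_SX$ is the isomorphism $\id_C$ found above. Hence $R\otimes_SK=0$, so $K=JK$, and \cref{lem:nil-nakayama} gives $K=0$. Thus $X\cong S\otimes_RC$ is projective over $S$.

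Only $\Tor_1$-vanishing is used here. The hypothesis on all $\Tor_i$ presumably matters later, for the dimension shifting. The main point needing care is the identification of $R\otimes_S\varphi$ with $\id_C$. This requires writing the isomorphism $R\otimes_S(S\otimes_RC)\cong C$ explicitly as $\bar a\otimes(b\otimes c)\mapsto \pi(ab)c$. It also requires checking that the $R$-linearity of $s$, which is taken through $\iota$, is compatible with the action on $X/JX$, which is taken through $\pi$. The equation $\pi\iota=\id_R$ is exactly what makes these two actions agree.
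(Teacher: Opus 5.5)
Your proposal is correct and follows the paper's proof essentially step for step. You take an $R$-linear section of $X\to X/JX$, extend scalars to $\varphi\colon S\otimes_RC\to X$, and observe that $R\otimes_S\varphi=\id_C$; then Nakayama gives surjectivity from the cokernel, and $\Tor_1^S(R,X)=0$ together with Nakayama gives injectivity from the kernel.
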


\begin{proof}
Write $q:X\twoheadrightarrow X/JX\cong C$ for the quotient map.  This map is
$R$-linear via the fixed section $R\hookrightarrow S$.  Since $C$ is projective over
$R$, choose an $R$-linear section $\sigma:C\to X$ of $q$.  Define
\[
  \Phi:S\otimes_RC\longrightarrow X,
  \qquad s\otimes c\longmapsto s\sigma(c).
\]
It is $S$-linear.  After applying $R\otimes_S-$, the map $\Phi$ becomes the identity
of $C$: under the canonical identification
$R\otimes_S(S\otimes_RC)\cong C$, its reduction sends $c$ to $q\sigma(c)=c$.

Let $Q=\Coker\Phi$.  Right exactness of $R\otimes_S-$ gives
$R\otimes_SQ=0$, equivalently $Q/JQ=0$.  Hence $Q=JQ$, and
\cref{lem:nil-nakayama} gives $Q=0$.  Thus $\Phi$ is surjective.

Let $K=\Ker\Phi$.  From
\[
 0\longrightarrow K\longrightarrow S\otimes_RC
 \xrightarrow{\Phi}X\longrightarrow0
\]
and the hypothesis $\Tor_1^S(R,X)=0$, tensoring with $R$ yields an exact sequence
\[
 0\longrightarrow R\otimes_SK\longrightarrow C
 \xrightarrow{\id_C}C\longrightarrow0.
\]
Therefore $R\otimes_SK=0$, so $K=JK$.  Again
\cref{lem:nil-nakayama} gives $K=0$.  Hence $\Phi$ is an isomorphism.  Since $C$
is $R$-projective, \cref{lem:induced-flat} shows that $S\otimes_RC$ is
$S$-projective.
\end{proof}

\begin{theorem}[Tor-independent projective-dimension formula]\label{thm:tor-pd}
Let $S$ be a split-by-nilpotent extension of $R$.  If $X$ is an
$R$-Tor-independent left $S$-module, then
\begin{equation}\label{eq:tor-pd}
   {\ \pd_SX=\pd_R(R\otimes_SX).\ }
\end{equation}
The equality holds in $\mathbb N\cup\{0,\infty\}$.
\end{theorem}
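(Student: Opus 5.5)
I will prove the two inequalities $\pd_R(R\otimes_SX)\le\pd_SX$ and $\pd_SX\le\pd_R(R\otimes_SX)$ separately, and treat the value $\infty$ by reading each inequality as ``if the right side is at most $n$, so is the left.''

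\emph{First inequality.} Suppose $\pd_SX\le n$, and choose a projective resolution
\[
0\to P_n\to\cdots\to P_0\to X\to0
\]
over $S$. Apply $R\otimes_S-$. The homology of the resulting complex is $\Tor_i^S(R,X)$, which vanishes for $i>0$ by $R$-Tor-independence. So the reduced complex
\[
0\to R\otimes_SP_n\to\cdots\to R\otimes_SP_0\to R\otimes_SX\to0
\]
is exact. Each $R\otimes_SP_i$ is $R$-projective, being a direct summand of a free $R$-module. Hence $\pd_R(R\otimes_SX)\le n$.

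\emph{Second inequality.} Suppose $\pd_R(R\otimes_SX)\le n$. I argue by induction on $n$; the case $n=0$ is exactly \cref{lem:base-projective}. For $n\ge1$, take a short exact sequence
\[
0\to K\to P\to X\to0
\]
with $P$ a free $S$-module. The long exact Tor sequence, together with $\Tor_i^S(R,P)=0$ and the Tor-independence of $X$, gives two facts:
\begin{itemize}
\item $\Tor_i^S(R,K)\cong\Tor_{i+1}^S(R,X)=0$ for all $i>0$, so $K$ is again $R$-Tor-independent;
\item since $\Tor_1^S(R,X)=0$, the sequence $0\to R\otimes_SK\to R\otimes_SP\to R\otimes_SX\to0$ is exact.
\end{itemize}
The middle term $R\otimes_SP$ is $R$-free. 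By the standard dimension shift, $\pd_R(R\otimes_SK)\le n-1$. The induction hypothesis then gives $\pd_SK\le n-1$, and therefore $\pd_SX\le n$.

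Combining the two inequalities gives \eqref{eq:tor-pd}, including the case where both sides are infinite. Essentially all of the substance sits in the base case, \cref{lem:base-projective}. There, nilpotence of $J$ (through \cref{lem:nil-nakayama}) and the ring section are used to lift projectivity, and that lemma is already available. The only point needing care in the inductive step is checking that the syzygy $K$ inherits Tor-independence, so that the lemma applies at every stage. This follows directly from the long exact sequence above, so I expect no real obstacle.
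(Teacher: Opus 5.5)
Your proof is correct and follows the paper's structure. One inequality comes from reducing a finite $S$-projective resolution, and the other from induction with \cref{lem:base-projective} as base case plus Tor-independence of the syzygy; the only variation is that you use an arbitrary $S$-free cover of $X$ instead of lifting an $R$-projective cover of $R\otimes_SX$ to $S\otimes_RP\to X$, which spares you the Nakayama surjectivity step and the exact value $\pd_RC'=n-1$, since inducting on upper bounds suffices.
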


\begin{proof}
Set $C=R\otimes_SX$.

\smallskip
\noindent\emph{Step 1: $\pd_SX\le\pd_RC$.}
If $\pd_RC=\infty$, there is nothing to prove.  Assume
$\pd_RC=n<\infty$ and argue by induction on $n$.

If $n=0$, the assertion is exactly \cref{lem:base-projective}.

Let $n>0$ and assume the result known in degree $n-1$.  Choose an exact sequence
of left $R$-modules
\begin{equation}\label{eq:Rstep}
 0\longrightarrow C'\longrightarrow P\xrightarrow{\varepsilon}C
 \longrightarrow0
\end{equation}
with $P$ projective and $\pd_RC'=n-1$ (the equality is the standard syzygy formula for a module of finite positive projective dimension).  Since $P$ is $R$-projective and
$q:X\twoheadrightarrow C$ is $R$-linear, there is an $R$-linear lift
$u:P\to X$ such that $qu=\varepsilon$.  By adjunction, $u$ induces an
$S$-linear map
\[
 \widetilde u:S\otimes_RP\longrightarrow X,
 \qquad s\otimes p\longmapsto su(p).
\]
Its reduction modulo $J$ is $\varepsilon:P\twoheadrightarrow C$.  If
$Q=\Coker\widetilde u$, then $R\otimes_SQ=0$, hence $Q=JQ$, and nilpotence of
$J$ gives $Q=0$.  Thus $\widetilde u$ is surjective.  Let
$K=\Ker\widetilde u$.

The exact sequence
\[
 0\longrightarrow K\longrightarrow S\otimes_RP
 \longrightarrow X\longrightarrow0
\]
shows, by the long exact Tor sequence and the projectivity of $S\otimes_RP$, that
\[
 \Tor_i^S(R,K)\cong\Tor_{i+1}^S(R,X)=0
 \qquad(i>0).
\]
Thus $K$ is $R$-Tor-independent.  Moreover, using
$\Tor_1^S(R,X)=0$, reduction gives an exact sequence
\[
 0\longrightarrow R\otimes_SK\longrightarrow P
 \xrightarrow{\varepsilon}C\longrightarrow0.
\]
Comparing with \eqref{eq:Rstep}, we obtain
$R\otimes_SK\cong C'$.  The induction hypothesis therefore yields
\[
  \pd_SK=\pd_RC'=n-1.
\]
Since $S\otimes_RP$ is projective over $S$, it follows that
$\pd_SX\le n$.

\smallskip
\noindent\emph{Step 2: $\pd_RC\le\pd_SX$.}
If $\pd_SX=\infty$, the inequality is automatic unless $\pd_RC<\infty$; but in
that case Step~1 would force $\pd_SX<\infty$, a contradiction.  Hence we may assume
$\pd_SX=m<\infty$.

Choose an $S$-projective resolution of length $m$,
\[
 0\longrightarrow P_m\longrightarrow P_{m-1}\longrightarrow\cdots
 \longrightarrow P_0\longrightarrow X\longrightarrow0.
\]
The homology of the complex obtained by applying $R\otimes_S-$ computes
$\Tor_i^S(R,X)$.  Since $X$ is $R$-Tor-independent, the reduced complex is exact:
\[
 0\longrightarrow R\otimes_SP_m\longrightarrow\cdots
 \longrightarrow R\otimes_SP_0\longrightarrow C\longrightarrow0.
\]
Each $R\otimes_SP_i$ is projective over $R$, because a split summand of an
$S$-free module remains a split summand of an $R$-free module after applying
$R\otimes_S-$.  Therefore $\pd_RC\le m=\pd_SX$.

Combining both steps proves \eqref{eq:tor-pd}.
\end{proof}

\begin{remark}\label{rem:compare-palmer}
For classical trivial extensions, projective-dimension formulae under Tor-vanishing
conditions are part of the homological framework developed by Palm\'er and Roos
\cite{PalmerRoos1973}.  The point of \cref{thm:tor-pd} is that the nilpotent
retraction itself is enough for the displayed equality once
$\Tor_{>0}^S(R,X)=0$ is assumed; no flatness or projectivity condition on the
kernel $J$ is required.
\end{remark}

Flat modules give the central special case.

\begin{corollary}[Flat projective-dimension formula]\label{cor:flat-pd}
Let $S$ be a split-by-nilpotent extension of $R$ with kernel $J$.  If $F$ is a flat
left $S$-module, then $F/JF$ is flat over $R$ and
\begin{equation}\label{eq:flat-pd}
   {\ \pd_SF=\pd_R(F/JF).\ }
\end{equation}
In particular, a flat $S$-module $F$ is projective if and only if $F/JF$ is
projective over $R$.
\end{corollary}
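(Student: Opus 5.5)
The corollary should follow almost immediately from the results already established, so the plan is to assemble it in three short steps.

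First, flatness of $F/JF$. Apply \cref{lem:reduction-flat} to the ring epimorphism $\pi:S\twoheadrightarrow R$: since $F$ is $S$-flat, $R\otimes_SF$ is $R$-flat. By the canonical isomorphism \eqref{eq:quotient}, $R\otimes_SF\cong F/JF$ as left $R$-modules, so $F/JF$ is flat over $R$.

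Second, the dimension formula. Every flat left $S$-module is $R$-Tor-independent, because $\Tor_i^S(R,F)=0$ for all $i>0$ whenever $F$ is flat. This is the observation recorded after \cref{def:torind}. Hence \cref{thm:tor-pd} applies to $X=F$ and gives $\pd_SF=\pd_R(R\otimes_SF)$. Substituting the identification \eqref{eq:quotient} once more yields \eqref{eq:flat-pd}, with equality in $\mathbb N\cup\{0,\infty\}$. The only care needed is that projective dimension is invariant under isomorphism of $R$-modules, and that the $R$-module structure on $F/JF$ coming from $S/J\cong R$ agrees with the one on $R\otimes_SF$. Both are routine.

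Third, the projectivity criterion. This is the case $\pd=0$ of \eqref{eq:flat-pd}: $F$ is $S$-projective if and only if $\pd_SF=0$, if and only if $\pd_R(F/JF)=0$, if and only if $F/JF$ is $R$-projective. For the nontrivial direction one can instead cite \cref{lem:base-projective} directly, which also gives the explicit description $F\cong S\otimes_R(F/JF)$.

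There is no real obstacle here, since all the homological work is contained in \cref{thm:tor-pd}. The one point worth stating explicitly is that the Tor-vanishing hypothesis holds automatically for flat modules, because $\Tor^S_i(-,F)$ vanishes in positive degrees for any right module argument, in particular for $R$ viewed as a right $S$-module via $\pi$.
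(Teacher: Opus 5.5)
Your proposal is correct and follows the paper's proof: flatness of $F/JF$ from \cref{lem:reduction-flat} together with \eqref{eq:quotient}, automatic $R$-Tor-independence of flat modules, then \cref{thm:tor-pd}, with the projectivity criterion as the $\pd=0$ case. Your alternative of citing \cref{lem:base-projective} directly for the nontrivial direction is also valid, and it additionally gives $F\cong S\otimes_R(F/JF)$.
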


\begin{proof}
Flatness implies $\Tor_i^S(R,F)=0$ for $i>0$, while
\cref{lem:reduction-flat} shows that $F/JF\cong R\otimes_SF$ is $R$-flat.
Now apply \cref{thm:tor-pd}.  The last statement is the case of projective
dimension zero.
\end{proof}

\section{Flat projective-dimension spectra and \texorpdfstring{$n$}{n}-perfectness}

The modulewise formula yields more information than equality of suprema.

\begin{definition}\label{def:spectrum}
For a ring $A$, define its \emph{left flat projective-dimension spectrum} by
\[
 \FlatPD_{\ell}(A)
 :=\{\pd_AF\mid F\text{ is a flat left }A\text{-module}\}
 \subseteq\mathbb N\cup\{0,\infty\}.
\]
Set
\[
 \splf_{\ell}(A):=\sup\FlatPD_{\ell}(A).
\]
Thus $A$ is left $n$-perfect exactly when $\splf_{\ell}(A)\le n$.
\end{definition}

\begin{theorem}[Spectrum invariance]\label{thm:spectrum}
If $S$ is a split-by-nilpotent extension of $R$, then
\begin{equation}\label{eq:spectrum}
   {\ \FlatPD_{\ell}(S)=\FlatPD_{\ell}(R).\ }
\end{equation}
Consequently,
\[
   \splf_{\ell}(S)=\splf_{\ell}(R).
\]
\end{theorem}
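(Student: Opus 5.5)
The plan is to prove the two inclusions of \eqref{eq:spectrum} separately, each by producing, for a flat module on one side, a flat module on the other side with the same projective dimension. The equality of suprema then follows immediately, since $\splf_{\ell}$ is defined as the supremum of the spectrum.

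\emph{Inclusion $\FlatPD_{\ell}(S)\subseteq\FlatPD_{\ell}(R)$ (reduction).} Let $F$ be a flat left $S$-module. By \cref{cor:flat-pd}, $F/JF\cong R\otimes_SF$ is a flat left $R$-module and $\pd_SF=\pd_R(F/JF)$. Hence $\pd_SF$ is the projective dimension of some flat $R$-module, so it lies in $\FlatPD_{\ell}(R)$. This includes the value $\infty$, since \cref{thm:tor-pd} holds in $\mathbb N\cup\{0,\infty\}$.

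\emph{Inclusion $\FlatPD_{\ell}(R)\subseteq\FlatPD_{\ell}(S)$ (induction).} Let $C$ be a flat left $R$-module and set $F:=S\otimes_RC$, with $S$ regarded as a right $R$-module via the section $\iota$. By \cref{lem:induced-flat}, $F$ is a flat left $S$-module. Since $\pi\iota=\id_R$, the same lemma also gives $R\otimes_SF\cong C$ as left $R$-modules.

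\begin{itemize}
\item First I would check that the associativity isomorphism $R\otimes_S(S\otimes_RC)\cong(R\otimes_SS)\otimes_RC\cong C$ is $R$-linear. The point is that $R\otimes_SS\cong R$ as an $R$-$R$-bimodule, where the right $R$-action is through $\pi\iota=\id_R$; this is exactly where the retraction is used.
\item Then I would apply \cref{cor:flat-pd} to $F$, which gives $\pd_SF=\pd_R(R\otimes_SF)=\pd_RC$.
\end{itemize}

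So $\pd_RC$ belongs to $\FlatPD_{\ell}(S)$, and the two inclusions together give \eqref{eq:spectrum}.

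I do not expect a serious obstacle, because the substantive work is already contained in \cref{thm:tor-pd}. The only point that needs care is the bimodule compatibility in the induction step. I would verify it on generators: the composite sends $r\otimes(s\otimes c)\mapsto \pi(s)\,r\,c$, no wait, more precisely to $r\pi(s)c$, and left multiplication by $r'\in R$ visibly commutes with this map.
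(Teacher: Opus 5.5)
Your proposal is correct and matches the paper's proof: one inclusion comes from reducing a flat $S$-module via \cref{cor:flat-pd}, the other from inducing a flat $R$-module $C$ to $S\otimes_RC$ and applying \cref{lem:induced-flat} and then \cref{cor:flat-pd}, and the equality of $\splf_{\ell}$ follows by taking suprema. Your added check that $R\otimes_S(S\otimes_RC)\cong C$ is $R$-linear, via $r\otimes(s\otimes c)\mapsto r\pi(s)c$, is correct and is a detail the paper leaves implicit.
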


\begin{proof}
Let $F$ be a flat left $S$-module.  By \cref{cor:flat-pd},
$C=R\otimes_SF$ is flat over $R$ and
$\pd_SF=\pd_RC$.  Hence
$\FlatPD_{\ell}(S)\subseteq\FlatPD_{\ell}(R)$.

Conversely, let $C$ be flat over $R$.  By \cref{lem:induced-flat},
$S\otimes_RC$ is flat over $S$, and its reduction is $C$.  Thus
\cref{cor:flat-pd} gives
\[
  \pd_S(S\otimes_RC)=\pd_RC.
\]
Therefore every value in $\FlatPD_{\ell}(R)$ occurs in
$\FlatPD_{\ell}(S)$, proving equality of the sets.  Taking suprema gives the last
formula.
\end{proof}

\begin{theorem}[$n$-perfectness under split nilpotent extensions]\label{thm:nperfect-general}
Let $n\ge0$.  If $S$ is a split-by-nilpotent extension of $R$, then
\[
   S\text{ is left }n\text{-perfect}
   \quad\Longleftrightarrow\quad
   R\text{ is left }n\text{-perfect}.
\]
The analogous equivalence holds for right $n$-perfectness.
\end{theorem}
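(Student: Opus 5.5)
The left-handed equivalence is immediate from \cref{thm:spectrum}. By \cref{def:spectrum}, a ring $A$ is left $n$-perfect exactly when $\splf_{\ell}(A)\le n$. \Cref{thm:spectrum} gives $\splf_{\ell}(S)=\splf_{\ell}(R)$ for any split-by-nilpotent extension. Hence $\splf_{\ell}(S)\le n$ if and only if $\splf_{\ell}(R)\le n$. Written out elementwise, the argument runs as follows.

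\emph{From $R$ to $S$.} If $R$ is left $n$-perfect and $F$ is a flat left $S$-module, then \cref{cor:flat-pd} shows that $F/JF$ is $R$-flat and $\pd_SF=\pd_R(F/JF)\le n$.

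\emph{From $S$ to $R$.} If $S$ is left $n$-perfect and $C$ is a flat left $R$-module, then $S\otimes_RC$ is $S$-flat with reduction $C$ by \cref{lem:induced-flat}. Therefore $\pd_RC=\pd_S(S\otimes_RC)\le n$.

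For the right-handed statement, I will pass to opposite rings rather than redo the proofs. The maps $\iota:R\to S$ and $\pi:S\to R$ remain ring homomorphisms $R^{\mathrm{op}}\to S^{\mathrm{op}}$ and $S^{\mathrm{op}}\to R^{\mathrm{op}}$. They still satisfy $\pi\iota=\id$, and the kernel is $J$ viewed inside $S^{\mathrm{op}}$. This kernel is nilpotent, since $J^t=0$ does not depend on the order of multiplication. Thus $S^{\mathrm{op}}$ is a split-by-nilpotent extension of $R^{\mathrm{op}}$ in the sense of \cref{def:splitnil}. Right modules over a ring $A$ are exactly left modules over $A^{\mathrm{op}}$, and this identification preserves flatness and projective dimension. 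So right $n$-perfectness of $A$ is left $n$-perfectness of $A^{\mathrm{op}}$, and the left case applied to $R^{\mathrm{op}}\to S^{\mathrm{op}}$ gives the right-handed equivalence.

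I do not expect a genuine obstacle. The only point needing care is that all earlier lemmas were stated for left modules over an arbitrary split-by-nilpotent extension, with no commutativity assumed, so they apply verbatim to the opposite rings. One must also check that $\pi^{\mathrm{op}}$ is a ring epimorphism, which it clearly is, so that \cref{lem:reduction-flat} and \cref{thm:tor-pd} are available on that side.
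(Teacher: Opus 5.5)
Your proposal is correct and follows the paper's route. The left-handed equivalence is read off from the equality $\splf_{\ell}(S)=\splf_{\ell}(R)$ of Theorem~\ref{thm:spectrum}, and the right-handed one comes from applying the left case to the opposite-ring retraction $S^{\mathrm{op}}\twoheadrightarrow R^{\mathrm{op}}$; your checks that $\iota,\pi$ remain identity-preserving ring maps and that $J^t=0$ persists in $S^{\mathrm{op}}$ supply details the paper leaves implicit.
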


\begin{proof}
The left-handed assertion follows immediately from
\cref{thm:spectrum}.  The right-handed proof is obtained by applying the same
argument to right modules; equivalently, apply the left-handed result to the
opposite-ring retraction $S^{\mathrm{op}}\twoheadrightarrow R^{\mathrm{op}}$.
\end{proof}

\begin{corollary}[Strict $n$-perfectness]\label{cor:strict}
Let $n\ge1$.  Under the hypotheses of \cref{thm:nperfect-general}, $S$ is left
$n$-perfect but not left $(n-1)$-perfect if and only if the same is true of $R$.
Also, $S$ is left perfect if and only if $R$ is left perfect.
\end{corollary}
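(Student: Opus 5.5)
The plan is to read both assertions of \cref{cor:strict} off \cref{thm:nperfect-general} (equivalently, off the equality $\splf_{\ell}(S)=\splf_{\ell}(R)$ of \cref{thm:spectrum}). No new homological input should be needed.

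For the strict statement, fix $n\ge1$. By \cref{thm:nperfect-general} applied with $n$, $S$ is left $n$-perfect if and only if $R$ is. Applying the same theorem with $n-1\ge0$ shows that $S$ is left $(n-1)$-perfect if and only if $R$ is, and hence $S$ fails to be left $(n-1)$-perfect if and only if $R$ does. The conjunction of the two conditions is therefore shared by $S$ and $R$. In the language of \cref{def:spectrum}, this says that $\splf_{\ell}(S)=n$ if and only if $\splf_{\ell}(R)=n$, which is immediate from \cref{thm:spectrum}.

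For the second assertion I would use Bass's characterization, recalled in the introduction via \cite{Bass1960}: $A$ is left perfect precisely when every flat left $A$-module is projective, that is, when $A$ is left $0$-perfect. The statement is then the case $n=0$ of \cref{thm:nperfect-general}. Concretely, $\FlatPD_{\ell}(A)\subseteq\{0\}$ is invariant under passing between $S$ and $R$ by \eqref{eq:spectrum}.

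The only point needing care is which definition of ``left perfect'' is meant. If it is taken in the ring-theoretic sense (semiperfect with left $T$-nilpotent Jacobson radical), the plan is to invoke Bass's theorem $P$ to identify it with the flat-implies-projective condition, and then conclude as above. As a cross-check, one can argue directly. The kernel $J$ is nilpotent, so $J\subseteq\operatorname{rad}S$ and $S/J\cong R$. Then $\operatorname{rad}S/J=\operatorname{rad}R$, so $\operatorname{rad}S$ is left $T$-nilpotent if and only if $\operatorname{rad}R$ is, since $J$ is nilpotent. Idempotents lift modulo the nilpotent ideal $J$, so semiperfectness transfers as well. I expect no real obstacle here; the homological route is the primary one.
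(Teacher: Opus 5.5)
Your proposal is correct and matches the paper's proof. You apply \cref{thm:nperfect-general} for $n$ and for $n-1$, and you obtain the perfectness assertion as the case $n=0$, reading ``left perfect'' as left $0$-perfect via Bass. Your ring-theoretic cross-check (left $T$-nilpotence of the radical, and lifting idempotents modulo the nilpotent ideal $J$) is also sound, though the paper does not need it.
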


\begin{proof}
Use \cref{thm:nperfect-general} for $n$ and $n-1$.  The final statement is the case
$n=0$.
\end{proof}

\begin{remark}\label{rem:bass}
For $n=0$, \cref{thm:nperfect-general} is compatible with Bass's structural theory
of perfect rings \cite{Bass1960}.  The present proof is homological and does not
require a separate analysis of Jacobson radicals or descending chains of principal
ideals.
\end{remark}

\section{Arbitrary trivial extensions: equivalent characterizations}

For idealizations, Boutghouchout--Kim--Mahdou considered $R\ltimes P$ with
$P$ projective and proved
\[
\cotD(R\ltimes P)=\cotD(R),
\]
obtaining corresponding consequences for perfectness and $n$-perfectness
\cite{BoutghouchoutKimMahdou}.  They also asked whether the equality remains
true for $R\ltimes M$ when $M$ is arbitrary.
 We will give an answer in this section.

 Throughout this
section, $R$ is a commutative ring, $M$ is an arbitrary $R$-module,
\[
   S=R\ltimes M,
   \qquad I=0\ltimes M.
\]
No condition is imposed on $M$.

For a commutative ring $A$ and an $A$-module $X$, recall that the cotorsion
dimension of $X$ is
\[
  \cd_A(X)
  =\inf\bigl\{n\ge0:\Ext_A^{n+1}(F,X)=0
  \text{ for every flat }A\text{-module }F\bigr\},
\]
with value $\infty$ if no such $n$ exists.  The global cotorsion dimension is
\[
  \cotD(A)=\sup\{\cd_A(X):X\in\Mod A\}.
\]

\begin{lemma}[Flat syzygies]\label{lem:flat-syzygy}
Let $A$ be a ring and let
\[
 0\longrightarrow K\longrightarrow P\longrightarrow F\longrightarrow0
\]
be exact, where $P$ is projective and $F$ is flat.  Then $K$ is flat.  Consequently,
all syzygies in a projective resolution of a flat module are flat.
\end{lemma}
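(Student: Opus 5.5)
The plan is to use the long exact Tor sequence together with the characterization of flatness by vanishing of $\Tor_1$. Fix an arbitrary right $A$-module $N$ and apply $N\otimes_A-$ to the exact sequence
\[
 0\longrightarrow K\longrightarrow P\longrightarrow F\longrightarrow0 .
\]
This gives an exact segment
\[
 \Tor_2^A(N,F)\longrightarrow\Tor_1^A(N,K)\longrightarrow\Tor_1^A(N,P).
\]
The first term vanishes because $F$ is flat. The last term vanishes because $P$ is projective, hence flat. Therefore $\Tor_1^A(N,K)=0$ for every right $A$-module $N$, and so $K$ is flat.

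If one prefers to avoid $\Tor_2$, there is an equivalent route. Take an injection $N'\hookrightarrow N$ and form the $3\times 3$ diagram of tensor products. Flatness of $P$ gives injectivity of $N'\otimes P\to N\otimes P$, and $\Tor_1^A(N,F)=0$ gives injectivity of $N\otimes K\to N\otimes P$. A snake-lemma chase then yields injectivity of $N'\otimes K\to N\otimes K$. The Tor argument is cleaner, so I will use it.

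For the consequence, take a projective resolution $\cdots\to P_1\to P_0\to F\to 0$ of a flat module $F$. Let $K_0=F$, and let $K_{i+1}=\Ker(P_i\to K_i)$. Each short exact sequence
\[
 0\longrightarrow K_{i+1}\longrightarrow P_i\longrightarrow K_i\longrightarrow 0
\]
has $P_i$ projective. By induction on $i$, the first part of the lemma shows that every $K_i$ is flat.

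There is no genuine obstacle here. The only point needing care is that $\Tor$ over a noncommutative ring must be taken with right modules in the first variable, so that flatness of the left module $K$ is tested against all right $A$-modules $N$.
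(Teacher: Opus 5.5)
Your proposal is correct and follows the paper's proof: the paper likewise uses the long exact Tor sequence to get $\Tor_1^A(N,K)\cong\Tor_2^A(N,F)=0$ for every right $A$-module $N$. It then iterates along the syzygies, exactly as in your induction.
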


\begin{proof}
For every right $A$-module $N$, the long exact Tor sequence gives
\[
 \Tor_1^A(N,K)\cong \Tor_2^A(N,F)=0,
\]
because $P$ is projective and $F$ is flat.  Hence $K$ is flat.  Iteration proves
the final assertion.
\end{proof}

\begin{proposition}[Cotorsion dimension and flat projective dimension]\label{prop:cot-splf}
For every commutative ring $A$,
\[
  \cotD(A)=\sup\{\pd_AF\mid F\text{ is flat over }A\}=\splf(A).
\]
Consequently, for $n\ge0$, $A$ is $n$-perfect if and only if
$\cotD(A)\le n$.
\end{proposition}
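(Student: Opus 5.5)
We prove the two inequalities $\cotD(A)\le\splf(A)$ and $\splf(A)\le\cotD(A)$ separately, working in $\mathbb N\cup\{0,\infty\}$. The consequence is then immediate: by \cref{def:spectrum}, $A$ is $n$-perfect exactly when $\splf(A)\le n$, and this is the same as $\cotD(A)\le n$.

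\emph{Step 1: $\cotD(A)\le\splf(A)$.} Suppose $\splf(A)=n<\infty$; otherwise there is nothing to prove. Then every flat module $F$ has $\pd_AF\le n$, so $\Ext_A^{n+1}(F,X)=0$ for every $A$-module $X$. By the definition of cotorsion dimension, $\cd_A(X)\le n$ for all $X$. Hence $\cotD(A)\le n$.

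\emph{Step 2: $\splf(A)\le\cotD(A)$.} Suppose $\cotD(A)=n<\infty$, so that $\Ext_A^{n+1}(F,X)=0$ for every flat $F$ and every module $X$. Fix a flat module $F$ and a projective resolution $\cdots\to P_1\to P_0\to F\to0$. Let $K_n$ be the $n$-th syzygy, with the convention $K_0=F$. By \cref{lem:flat-syzygy}, every $K_i$ is flat.

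\begin{itemize}
\item Dimension shifting along the short exact sequences $0\to K_{i+1}\to P_i\to K_i\to0$ gives
\[
\Ext_A^1(K_n,X)\cong\Ext_A^{n+1}(F,X)=0\qquad\text{for every module }X.
\]
\item Thus $K_n$ is projective, since $\Ext^1_A(K_n,-)=0$ forces every short exact sequence ending in $K_n$ to split.
\item Therefore $\pd_AF\le n$.
\end{itemize}

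Taking the supremum over all flat $F$ gives $\splf(A)\le n$.

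The main care needed is the bookkeeping of infinite values. Each step is proved only under the hypothesis that the right-hand side is finite, and the two inequalities together then cover every case. We also note that vanishing of $\Ext^{n+1}$ against all modules $X$, not just cotorsion ones, is exactly what the global definition provides, so the shift in Step 2 is legitimate. No step looks like a genuine obstacle. The only point to check is that commutativity plays no role beyond the definitions, so the argument works verbatim.
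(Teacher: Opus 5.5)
Your strategy is the paper's: compare finite upper bounds, get $\cotD(A)\le\splf(A)$ directly from $\pd_AF\le n$, and get the reverse inequality from vanishing of $\Ext^{n+1}_A(F,-)$ together with the projective-dimension criterion. Step~1 and your handling of infinite values are fine. There is, however, one unjustified step, and it is where the actual content of the proposition lies.

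In Step~2 you assert that $\cotD(A)=n$ gives $\Ext_A^{n+1}(F,X)=0$ for every flat $F$ and every $X$, and you later call this ``exactly what the global definition provides.'' It is not. Since $\cd_A(X)$ is defined as an infimum, $\cotD(A)=n$ only gives, for each $X$, some $m=\cd_A(X)\le n$ with $\Ext_A^{m+1}(F,X)=0$ for all flat $F$. This $m$ may be strictly smaller than $n$.

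To reach degree $n+1$ you must show that vanishing against \emph{all flat} modules propagates upward. For a flat $F$ with syzygies $K_j$, dimension shifting gives $\Ext_A^{m+1+j}(F,X)\cong\Ext_A^{m+1}(K_j,X)$. This is zero because $K_j$ is flat by \cref{lem:flat-syzygy}. The paper argues exactly this way, and it is the only place the flat-syzygy lemma is actually needed.

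In your Step~2 as written, the flatness of the $K_i$ is never used: the isomorphism $\Ext_A^1(K_n,X)\cong\Ext_A^{n+1}(F,X)$ and the conclusion that $K_n$ is projective hold for an arbitrary module $F$. So the lemma is cited where it is irrelevant and omitted where it is essential. Inserting the upward-shift argument at the start of Step~2 closes the gap, and the rest of your proof then goes through.
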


\begin{proof}
Put
\[
 d:=\sup\{\pd_AF\mid F\text{ flat}\}.
\]
We compare finite upper bounds.  If $d\le n$, then every flat $F$ has
$\pd_AF\le n$, hence $\Ext_A^{n+1}(F,X)=0$ for every $A$-module $X$; therefore
$\cotD(A)\le n$.

Conversely, assume $\cotD(A)\le n$.  Fix an $A$-module $X$.  By definition there
is some $m\le n$ such that
\[
 \Ext_A^{m+1}(F,X)=0
 \qquad\text{for every flat }F.
\]
By \cref{lem:flat-syzygy} and dimension shifting, the same vanishing holds in
every degree $j\ge m+1$, in particular in degree $n+1$.  Thus
\[
 \Ext_A^{n+1}(F,X)=0
\]
for every flat $F$ and every $X$.  The standard projective-dimension criterion
then gives $\pd_AF\le n$ for every flat $F$, so $d\le n$.

Hence $d$ and $\cotD(A)$ have exactly the same finite upper bounds.  This proves
the equality in $\mathbb N\cup\{0,\infty\}$.  Compare
\cite{MaoDing2006,BennisMahdou2009}.
\end{proof}

\begin{theorem}[Complete $n$-perfectness characterization for idealizations]\label{thm:equiv}
Let $R$ be a commutative ring, $M$ an arbitrary $R$-module,
$S=R\ltimes M$, and $I=0\ltimes M$.  For an integer $n\ge0$, the following are
equivalent:
\begin{enumerate}[label=\textup{(\roman*)}]
\item $R$ is $n$-perfect;
\item $S$ is $n$-perfect;
\item $\cotD(R)\le n$;
\item $\cotD(S)\le n$;
\item for every flat $S$-module $F$, the $R$-module $F/IF$ is flat and
      \[
        \pd_R(F/IF)\le n;
      \]
\item for every flat $S$-module $F$,
      \[
        \pd_SF=\pd_R(F/IF)\le n;
      \]
\item for every flat $R$-module $C$, the induced $S$-module $S\otimes_RC$ is flat
      and
      \[
        \pd_S(S\otimes_RC)=\pd_RC\le n;
      \]
\item $\FlatPD(S)=\FlatPD(R)\subseteq\{0,1,\dots,n\}$.
\end{enumerate}
\end{theorem}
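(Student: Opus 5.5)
The plan is to first check that $S=R\ltimes M$ is a split-by-nilpotent extension of $R$ in the sense of \cref{def:splitnil}. The maps are $\iota(r)=(r,0)$ and $\pi(r,m)=r$. These are unital ring homomorphisms with $\pi\iota=\id_R$, and $J=\ker\pi=I=0\ltimes M$ satisfies $I^2=0$. Once this is in place, all the general results of Sections~3 and~4 apply with $J=I$ and with no hypothesis on $M$. In this commutative setting the left and right notions coincide, so we simply write $\FlatPD$ and $\splf$.

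The equivalences then split into groups. First, (i)$\Leftrightarrow$(ii) is \cref{thm:nperfect-general}. Next, (i)$\Leftrightarrow$(iii) and (ii)$\Leftrightarrow$(iv) follow from \cref{prop:cot-splf} applied to $A=R$ and $A=S$; both rings are commutative, which that proposition needs.

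For the modulewise conditions, recall that by \cref{cor:flat-pd} every flat $S$-module $F$ has $F/IF$ flat over $R$ and $\pd_SF=\pd_R(F/IF)$. Hence (ii)$\Rightarrow$(vi)$\Rightarrow$(v) is immediate. For (v)$\Rightarrow$(ii), the same equality gives $\pd_SF\le n$ for every flat $S$-module $F$. For (vii), \cref{lem:induced-flat} shows that $S\otimes_RC$ is flat with reduction $C$, and \cref{cor:flat-pd} then gives $\pd_S(S\otimes_RC)=\pd_RC$. Thus (i)$\Rightarrow$(vii), and (vii)$\Rightarrow$(i) is just the inequality part read off for every flat $C$. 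Finally, \cref{thm:spectrum} always gives $\FlatPD(S)=\FlatPD(R)$. So (viii) is equivalent to $\FlatPD(R)\subseteq\{0,\dots,n\}$, which is (i), with the value $\infty$ excluded because $n$-perfectness bounds every flat module's projective dimension by $n$.

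I expect no real obstacle, since all the homological work is already done in \cref{thm:tor-pd}. The step needing the most care is verifying that the hypotheses of the general theorems hold for an arbitrary $M$. One point is that the $R$-module structure used in \cref{lem:base-projective} comes from $\iota$. Another is that $F/IF$ is identified with $R\otimes_SF$ via $\pi$, as in \eqref{eq:quotient}. Beyond that, we only need to assemble the cycle of implications so that each link cites exactly one earlier result.
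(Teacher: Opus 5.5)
Your proposal is correct and follows the paper's proof. The paper likewise notes that $R\ltimes M\twoheadrightarrow R$ is a split extension with square-zero kernel $I$. It then obtains the equivalence of (i), (ii), (v)--(viii) from \cref{cor:flat-pd,thm:spectrum,thm:nperfect-general}, and (i)$\Leftrightarrow$(iii), (ii)$\Leftrightarrow$(iv) from \cref{prop:cot-splf}. You simply write out the individual implications, such as (v)$\Rightarrow$(ii) and (vii)$\Rightarrow$(i), which the paper leaves implicit.
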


\begin{proof}
The idealization projection $S\twoheadrightarrow R$ is split and has square-zero
kernel $I$, hence it is a split-by-nilpotent extension.  The equivalence of
(i), (ii), (v), (vi), (vii), and (viii) follows from
\cref{cor:flat-pd,thm:spectrum,thm:nperfect-general}.  The equivalence of
(i) with (iii), and of (ii) with (iv), is \cref{prop:cot-splf}.
\end{proof}

The global equality is stronger than the requested finite bound.

\begin{theorem}[Cotorsion-dimension invariance for arbitrary idealizations]\label{thm:cotD}
For every commutative ring $R$ and every $R$-module $M$,
\begin{equation}\label{eq:cotD}
   {\ \cotD(R\ltimes M)=\cotD(R).\ }
\end{equation}
Moreover,
\begin{equation}\label{eq:spec-trivial}
   \FlatPD(R\ltimes M)=\FlatPD(R).
\end{equation}
\end{theorem}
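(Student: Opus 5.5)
The plan is to reduce both displayed equalities to results already proved, using that the idealization is a split-by-nilpotent extension. First I will record that $S=R\ltimes M$ fits \cref{def:splitnil}. The maps are $\iota:r\mapsto(r,0)$ and $\pi:(r,m)\mapsto r$. Both are identity-preserving ring homomorphisms with $\pi\iota=\id_R$. The kernel $I=0\ltimes M$ satisfies $I^2=0$, since $(0,m)(0,m')=(0,0)$. No hypothesis on $M$ enters at this point.

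For \eqref{eq:spec-trivial}, I will apply \cref{thm:spectrum} directly to this retraction. It gives $\FlatPD_\ell(S)=\FlatPD_\ell(R)$. Both rings are commutative, so left and right modules coincide and $\FlatPD_\ell$ is just $\FlatPD$.

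For \eqref{eq:cotD}, I will take suprema of the equal spectra, which gives $\splf(S)=\splf(R)$ in $\mathbb N\cup\{0,\infty\}$. I will then invoke \cref{prop:cot-splf} for $A=S$ and for $A=R$. Both are commutative rings, so $\cotD(S)=\splf(S)=\splf(R)=\cotD(R)$.

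The only point needing care is that the equality holds in the extended range, including the value $\infty$. This is already built into \cref{prop:cot-splf}, whose proof compares all finite upper bounds. Alternatively, one can argue from \cref{thm:equiv}: for every $n\ge0$, $\cotD(S)\le n$ holds if and only if $\cotD(R)\le n$. Two elements of $\mathbb N\cup\{0,\infty\}$ with the same set of finite upper bounds are equal, so either route gives the result. I do not expect a genuine obstacle, since all the homological work has been done in \cref{thm:tor-pd}.
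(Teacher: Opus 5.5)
Your proposal is correct and follows the same route as the paper: the paper obtains the spectrum equality directly from \cref{thm:spectrum} applied to the split square-zero retraction $R\ltimes M\twoheadrightarrow R$, and then takes suprema and applies \cref{prop:cot-splf} to get $\cotD(R\ltimes M)=\cotD(R)$. Your explicit check that the idealization fits \cref{def:splitnil}, and your remark that the equality holds including the value $\infty$, are details the paper leaves implicit; neither changes the argument.
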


\begin{proof}
Equation \eqref{eq:spec-trivial} is \cref{thm:spectrum}.  Taking suprema and using
\cref{prop:cot-splf} gives \eqref{eq:cotD}.
\end{proof}

\begin{corollary}\label{cor:perfect-arbitrary}
Let $M$ be any $R$-module.  Then $R\ltimes M$ is perfect if and only if $R$ is
perfect.  More generally, if $\cotD(R)=n<\infty$, then $R\ltimes M$ is exactly
$n$-perfect in the sense that it is $n$-perfect and, for $n>0$, it is not
$(n-1)$-perfect.
\end{corollary}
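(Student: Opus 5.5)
The plan is to read both assertions of \cref{cor:perfect-arbitrary} off \cref{thm:cotD} and \cref{prop:cot-splf}, with no new homological input. Set $S=R\ltimes M$. The idealization projection $S\twoheadrightarrow R$ has the ring section $r\mapsto(r,0)$, and its kernel $I=0\ltimes M$ satisfies $I^2=0$. So $S$ is a split-by-nilpotent extension of $R$, and \cref{thm:cotD} gives $\cotD(S)=\cotD(R)$ in $\mathbb N\cup\{0,\infty\}$. Nothing is assumed about $M$.

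For the first assertion, recall that ``perfect'' here means Bass perfect, that is, $0$-perfect: every flat module is projective. By \cref{prop:cot-splf} applied with $A=R$ and $A=S$, the ring $A$ is $0$-perfect if and only if $\cotD(A)\le 0$, i.e. $\cotD(A)=0$. Since $\cotD(S)=\cotD(R)$, the two conditions coincide. Alternatively, one can quote the case $n=0$ of \cref{thm:nperfect-general} or the last sentence of \cref{cor:strict}.

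For the second assertion, suppose $\cotD(R)=n<\infty$. Then $\cotD(S)=n$. By \cref{prop:cot-splf}, $S$ is $n$-perfect because $\cotD(S)\le n$. If $n>0$ and $S$ were $(n-1)$-perfect, the same proposition would give $\cotD(S)\le n-1<n$, a contradiction. Equivalently, one can apply \cref{cor:strict} to $R$: the hypothesis $\cotD(R)=n$ says exactly that $R$ is $n$-perfect and not $(n-1)$-perfect, and that corollary transfers this to $S$.

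I do not expect a genuine obstacle. The only point needing care is the meaning of ``perfect'' in the statement: it must be matched with $0$-perfectness in the sense of the introduction, so that \cref{prop:cot-splf} applies with $n=0$. For commutative rings, where left and right coincide, this is the same as Bass's notion by the remark following \cref{cor:strict}.
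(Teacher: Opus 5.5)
Your proposal is correct and follows essentially the paper's own proof. The paper combines \cref{thm:equiv}, whose equivalences (i)$\Leftrightarrow$(iii) and (ii)$\Leftrightarrow$(iv) are exactly \cref{prop:cot-splf}, with the equality $\cotD(R\ltimes M)=\cotD(R)$ from \cref{thm:cotD}. One small correction: the identification of ``perfect'' with $0$-perfect is Bass's characterization quoted in the introduction, not something established by \cref{rem:bass}, which only records compatibility with Bass's theory.
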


\begin{proof}
Use \cref{thm:equiv,thm:cotD}.
\end{proof}

\begin{proposition}[Restriction of cotorsion dimension]\label{prop:cd-restrict}
Let $N$ be an $S=R\ltimes M$-module.  Then
\[
   \cd_R( N)\le\cd_S(N).
\]
In particular, every cotorsion $S$-module is cotorsion after restriction of
scalars to $R$.
\end{proposition}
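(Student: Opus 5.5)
The plan is to compare $\Ext$ over $R$ against flat $R$-modules with $\Ext$ over $S$ against the induced flat $S$-modules. Here $R$ acts on $N$ by restriction of scalars along the section $R\hookrightarrow S$, $r\mapsto(r,0)$. The key claim is that for every flat $R$-module $C$ and every $i\ge0$ there is an isomorphism
\[
  \Ext_R^i(C,N)\cong\Ext_S^i(S\otimes_RC,N).
\]
Since $S\otimes_RC$ is flat over $S$ by \cref{lem:induced-flat}, this gives the inequality directly. Suppose $\cd_S(N)=n<\infty$, so that $\Ext_S^{n+1}(F,N)=0$ for every flat $S$-module $F$. Taking $F=S\otimes_RC$ yields $\Ext_R^{n+1}(C,N)=0$ for every flat $R$-module $C$, hence $\cd_R(N)\le n$. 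If $\cd_S(N)=\infty$ there is nothing to prove. The cotorsion statement is the case $n=0$: $\Ext^1_S(F,N)=0$ for all flat $F$ implies $\Ext^1_R(C,N)=0$ for all flat $C$.

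To prove the isomorphism, I would take a projective resolution $P_\bullet\to C\to0$ over $R$ and apply the induction functor $S\otimes_R-$. Each $S\otimes_RP_j$ is $S$-projective by \cref{lem:induced-flat}. The homology of $S\otimes_RP_\bullet$ in positive degrees is $\Tor_i^R(S,C)$, where $S$ is regarded as a right $R$-module. These groups vanish because $C$ is $R$-flat; no hypothesis on $S$ as an $R$-module, and hence none on $M$, is needed. Therefore $S\otimes_RP_\bullet$ is an $S$-projective resolution of $S\otimes_RC$. The adjunction $\operatorname{Hom}_S(S\otimes_RP_j,N)\cong\operatorname{Hom}_R(P_j,N)$ is natural in $P_j$, so it gives an isomorphism of cochain complexes, and taking cohomology yields the claimed isomorphism of $\Ext$ groups.

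The only step that needs care is the exactness of the induced resolution, that is, the vanishing of $\Tor_{>0}^R(S,C)$. This is exactly where flatness of $C$, rather than any property of $M$, is used, so I expect no real obstacle. As in the introduction, I would avoid the $R$-module decomposition $S\cong R\oplus M$: only the adjunction and the flatness of $C$ enter the argument.
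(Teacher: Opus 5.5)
Your proposal is correct and follows the paper's proof essentially step for step. Both arguments induce an $R$-projective resolution of a flat $C$ up to $S$, get exactness from $\Tor^R_{>0}(S,C)=0$, and use termwise adjunction to obtain $\Ext_S^i(S\otimes_RC,N)\cong\Ext_R^i(C,N)$. They then conclude via the $S$-flatness of $S\otimes_RC$ from \cref{lem:induced-flat}.
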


\begin{proof}
Let $C$ be a flat $R$-module.  Choose an $R$-projective resolution
$P_\bullet\to C$.  Since $C$ is flat,
$\Tor_i^R(S,C)=0$ for $i>0$, so $S\otimes_RP_\bullet$ is an exact
$S$-projective resolution of $S\otimes_RC$.  Termwise adjunction gives
\[
 \operatorname{Hom}_S(S\otimes_RP_j,N)
 \cong\operatorname{Hom}_R(P_j,N),
\]
and hence
\begin{equation}\label{eq:Extind}
 \Ext_S^i(S\otimes_RC,N)
 \cong \Ext_R^i(C,N)
 \qquad(i\ge0).
\end{equation}
By \cref{lem:induced-flat}, $S\otimes_RC$ is $S$-flat.  Therefore a bound on
$\cd_SN$ gives the same bound on $\cd_R(N)$.
\end{proof}

\begin{remark}\label{rem:coinduction-warning}
The converse transport of cotorsion through coinduction requires separate care.
The relevant functor is
\[
  \operatorname{Hom}_R(S,-).
\]
If $M$ is arbitrary, $S\cong R\oplus M$ need not be projective over $R$, so
this functor need not be exact.  No such exactness is used in
\cref{thm:cotD}.
\end{remark}

\section{Localization and arbitrary iteration}

We next record consequences that are safe without invoking any unqualified global
local--global formula for cotorsion dimension.

\begin{proposition}[Pointwise localization]\label{prop:local}
Let $R$ be commutative, $M$ an $R$-module, $S=R\ltimes M$, and
$\mathfrak p\in\Spec R$.  Put
\[
  \mathfrak P=\mathfrak p\ltimes M.
\]
Then every prime ideal of $S$ is uniquely of this form and
\[
  S_{\mathfrak P}\cong R_{\mathfrak p}\ltimes M_{\mathfrak p}.
\]
Consequently, for every $n\ge0$,
\[
  S_{\mathfrak P}\text{ is }n\text{-perfect}
  \quad\Longleftrightarrow\quad
  R_{\mathfrak p}\text{ is }n\text{-perfect},
\]
and
\[
  \cotD(S_{\mathfrak P})=\cotD(R_{\mathfrak p}).
\]
\end{proposition}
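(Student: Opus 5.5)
The proof has three parts: the description of $\Spec S$, the identification of the localization, and an application of \cref{thm:nperfect-general} and \cref{thm:cotD} to the localized idealization.

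\emph{Primes of $S$.} The key fact is that $I=0\ltimes M$ satisfies $I^2=0$. Hence $I$ is contained in every prime ideal $\mathfrak Q$ of $S$: if $x\in I$, then $x^2=0\in\mathfrak Q$, so $x\in\mathfrak Q$. Primes of $S$ therefore correspond bijectively to primes of $S/I\cong R$ via the projection $\pi$. The preimage of $\mathfrak p$ under $\pi$ is exactly $\mathfrak p\ltimes M$, so every prime is uniquely of the stated form.

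\emph{The localization.} We identify $S_{\mathfrak P}$ with $R_{\mathfrak p}\ltimes M_{\mathfrak p}$. The complement $S\setminus\mathfrak P$ consists of the pairs $(s,m)$ with $s\notin\mathfrak p$. Consider the ring map
\[
 \varphi:S\to R_{\mathfrak p}\ltimes M_{\mathfrak p},\qquad (r,m)\mapsto (r/1,m/1).
\]
It sends $S\setminus\mathfrak P$ to units, since $(s,m)$ with $s\notin\mathfrak p$ has inverse $(1/s,-m/s^2)$. So $\varphi$ factors through $S_{\mathfrak P}$. To show the induced map is an isomorphism, I would use the universal property, or more concretely the following.
\begin{itemize}
\item \emph{Surjectivity.} We have $(r/s,m/t)=\varphi(r,0)\varphi(s,0)^{-1}+\varphi(0,m)\varphi(t,0)^{-1}$.
\item \emph{Injectivity.} Suppose $(r,m)/(s,m')$ maps to $0$. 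Since $(s,m')$ is already a unit in $S_{\mathfrak P}$, it suffices to treat elements $(r,m)/1$. Then $ur=0$ and $vm=0$ for some $u,v\notin\mathfrak p$, and $(uv,0)\in S\setminus\mathfrak P$ kills $(r,m)$.
\end{itemize}
An alternative is to view $S_{\mathfrak P}$ as $S\otimes_R R_{\mathfrak p}$ localized further. But the elements of $S\setminus\mathfrak P$ already become units in $S\otimes_R R_{\mathfrak p}=R_{\mathfrak p}\ltimes M_{\mathfrak p}$, so no further localization is needed; this gives a slicker route to the same identification.

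\emph{Conclusion.} Once $S_{\mathfrak P}\cong R_{\mathfrak p}\ltimes M_{\mathfrak p}$ is established, $S_{\mathfrak P}$ is a trivial extension of the commutative ring $R_{\mathfrak p}$ by an arbitrary module. Hence \cref{thm:equiv} (equivalently \cref{thm:nperfect-general}) gives the $n$-perfectness equivalence, and \cref{thm:cotD} gives $\cotD(S_{\mathfrak P})=\cotD(R_{\mathfrak p})$. No hypothesis on $M_{\mathfrak p}$ is needed.

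The only real content is the localization isomorphism, and its one delicate point is that the multiplicative set $S\setminus\mathfrak P$ contains non-homogeneous elements $(s,m)$. The unit computation above handles this. Everything else is bookkeeping.
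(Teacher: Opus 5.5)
Your proposal is correct and follows the same route as the paper. You use $I^2=0$ to place $I$ in every prime, which gives $\Spec S\cong\Spec R$; you identify $S_{\mathfrak P}$ with $R_{\mathfrak p}\ltimes M_{\mathfrak p}$; and you then apply \cref{thm:equiv} and \cref{thm:cotD} over $R_{\mathfrak p}$. The only difference is that you check the localization isomorphism explicitly, including the unit $(1/s,-m/s^2)$ that handles non-homogeneous elements of $S\setminus\mathfrak P$, whereas the paper just asserts that localization commutes with idealization.
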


\begin{proof}
The square-zero ideal $I=0\ltimes M$ is contained in every prime ideal of $S$.
Thus $\Spec S\to\Spec(S/I)\cong\Spec R$ is a bijection, with
$\mathfrak p$ corresponding to $\mathfrak P$.  Localization commutes with the
idealization construction, giving the displayed ring isomorphism.  Apply
\cref{thm:equiv,thm:cotD} over $R_{\mathfrak p}$.
\end{proof}

\begin{corollary}[Locally $n$-perfect]\label{cor:locally}
With the notation above, the following are equivalent:
\begin{enumerate}[label=\textup{(\roman*)}]
\item $R_{\mathfrak p}$ is $n$-perfect for every $\mathfrak p\in\Spec R$;
\item $S_{\mathfrak P}$ is $n$-perfect for every $\mathfrak P\in\Spec S$.
\end{enumerate}
\end{corollary}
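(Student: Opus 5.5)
The plan is to deduce \cref{cor:locally} pointwise from \cref{prop:local}, using the bijection $\Spec S\to\Spec R$ recorded there. The key input is the description of all primes of $S=R\ltimes M$. Since $I=0\ltimes M$ satisfies $I^2=0$, every prime $\mathfrak P$ of $S$ contains $I$. Hence $\mathfrak P$ corresponds to a prime of $S/I\cong R$, so $\mathfrak P=\mathfrak p\ltimes M$ for a unique $\mathfrak p\in\Spec R$. Conversely, each $\mathfrak p\ltimes M$ is prime, being the preimage of $\mathfrak p$ under the projection $S\twoheadrightarrow R$. Thus $\mathfrak p\mapsto\mathfrak p\ltimes M$ is a bijection $\Spec R\to\Spec S$.

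For (i)$\Rightarrow$(ii), we fix an arbitrary $\mathfrak P\in\Spec S$ and write it as $\mathfrak P=\mathfrak p\ltimes M$. By \cref{prop:local}, $S_{\mathfrak P}\cong R_{\mathfrak p}\ltimes M_{\mathfrak p}$. This ring is $n$-perfect if and only if $R_{\mathfrak p}$ is, by the pointwise equivalence in \cref{prop:local}, or directly by \cref{thm:equiv} applied to the commutative ring $R_{\mathfrak p}$ and its module $M_{\mathfrak p}$. Hypothesis (i) then gives (ii).

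For (ii)$\Rightarrow$(i), we fix $\mathfrak p\in\Spec R$ and apply (ii) to $\mathfrak P=\mathfrak p\ltimes M$. The same equivalence gives that $R_{\mathfrak p}$ is $n$-perfect.

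The only point requiring care is the identification $S_{\mathfrak P}\cong R_{\mathfrak p}\ltimes M_{\mathfrak p}$, which \cref{prop:local} supplies. To verify it, observe that an element $(r,m)$ lies outside $\mathfrak P$ exactly when $r\notin\mathfrak p$. Such an element becomes a unit in $R_{\mathfrak p}\ltimes M_{\mathfrak p}$, with inverse $(r^{-1},-r^{-2}m)$. One then checks the universal property of localization. This step is routine, so there is no real obstacle beyond correctly matching the indexing sets via the prime bijection.
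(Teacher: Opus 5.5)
Your proposal is correct and is essentially the paper's proof: apply the pointwise equivalence of \cref{prop:local} in both directions, using the bijection $\mathfrak p\mapsto\mathfrak p\ltimes M$ between $\Spec R$ and $\Spec S$. Your extra verification of $S_{\mathfrak P}\cong R_{\mathfrak p}\ltimes M_{\mathfrak p}$ (the inverse $(r^{-1},-r^{-2}m)$ is right) only re-checks what \cref{prop:local} already asserts, so it is harmless.
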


\begin{proof}
This is the pointwise statement of \cref{prop:local} under the natural bijection of
prime spectra.
\end{proof}

\begin{remark}\label{rem:localglobal-warning}
\Cref{prop:local,cor:locally} are deliberately pointwise.  They do not require, and
we do not use, a formula asserting that the global cotorsion dimension of an
arbitrary commutative ring equals the supremum of the cotorsion dimensions of all
its localizations.
\end{remark}

\begin{theorem}[Arbitrary iterated idealizations]\label{thm:iteration}
Let $R$ be commutative, put $R_0=R$, and define recursively
\[
  R_{i+1}=R_i\ltimes M_i\qquad(i\ge0),
\]
where each $M_i$ is an arbitrary $R_i$-module.  Then every $R_i$ is commutative, and for every $i$
\[
  \FlatPD(R_i)=\FlatPD(R),
  \qquad
  \cotD(R_i)=\cotD(R).
\]
Hence $R_i$ is $n$-perfect if and only if $R$ is $n$-perfect.
\end{theorem}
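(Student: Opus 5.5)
The plan is an induction on $i$, with \cref{thm:cotD} (equivalently \cref{thm:spectrum}) as the inductive step. Commutativity comes first. If $R_i$ is commutative, then $R_{i+1}=R_i\ltimes M_i$ is commutative, because the multiplication
\[
(r,m)(r',m')=(rr',rm'+r'm)
\]
is visibly symmetric in the two factors once $R_i$ is commutative and $M_i$ is regarded as a symmetric bimodule. Since $R_0=R$ is commutative, this gives commutativity of every $R_i$. That matters, because the cotorsion-dimension statements of Section~5 and \cref{prop:cot-splf} are formulated for commutative rings.

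For the invariants, the base case $i=0$ is trivial. For the inductive step, I apply \cref{thm:cotD} to the commutative ring $R_i$ and the arbitrary $R_i$-module $M_i$. This gives
\[
\FlatPD(R_{i+1})=\FlatPD(R_i)
\qquad\text{and}\qquad
\cotD(R_{i+1})=\cotD(R_i).
\]
Composing with the inductive hypothesis $\FlatPD(R_i)=\FlatPD(R)$ and $\cotD(R_i)=\cotD(R)$ yields the claim for $i+1$.

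Alternatively, one can check directly that $R\to R_i$ is a split-by-nilpotent extension and apply \cref{thm:spectrum} once. Composing the projections $R_i\to R_{i-1}\to\cdots\to R$ and the sections in the reverse order gives a retraction. Its kernel $J$ has a filtration whose successive quotients are images of square-zero ideals, so $J^{2^i}=0$. I nevertheless prefer the stepwise induction, since it needs no such verification.

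For the final sentence, note that \cref{prop:cot-splf} says a commutative ring $A$ is $n$-perfect exactly when $\cotD(A)\le n$. Since $\cotD(R_i)=\cotD(R)$, it follows that $R_i$ is $n$-perfect if and only if $R$ is. Equivalently, this follows from $\FlatPD(R_i)=\FlatPD(R)$ via \cref{def:spectrum}.

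There is no real obstacle here. The only point needing care is the bookkeeping that each $M_i$ is a module over the already-built commutative ring $R_i$, not over $R$. This is precisely why \cref{thm:cotD} must be invoked at each stage with base ring $R_i$, which is legitimate because that theorem places no hypothesis on the module.
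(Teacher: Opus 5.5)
Your proposal is correct and follows essentially the same route as the paper, which also applies \cref{thm:spectrum,thm:cotD} at each stage and inducts on $i$. Your explicit check that commutativity propagates from $R_i$ to $R_i\ltimes M_i$ is a harmless addition that the paper leaves implicit; it is needed so that the cotorsion-dimension results apply at every stage.
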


\begin{proof}
Apply \cref{thm:spectrum,thm:cotD} at each stage and argue by induction.
\end{proof}

\section{Beyond ordinary idealizations}

The main theorem applies to every split nilpotent retraction, so several familiar
constructions are immediate consequences.

\subsection{\texorpdfstring{$n$}{n}-trivial extensions}

Let $R\ltimes_n(M_1,\dots,M_n)$ be an $n$-trivial extension in the sense of
\cite{AndersonEtAl2017,BenkhadraBennisGarcia2020}.  Additively it is
$R\oplus M_1\oplus\cdots\oplus M_n$, the projection onto $R$ is a ring
retraction, and the positive part
\[
  J=M_1\oplus\cdots\oplus M_n
\]
is nilpotent, with $J^{n+1}=0$.

\begin{corollary}[$n$-trivial extensions]\label{cor:ntrivial}
Let $T=R\ltimes_n(M_1,\dots,M_n)$.  If $F$ is a flat left $T$-module, then
\[
  \pd_TF=\pd_R(R\otimes_TF).
\]
Moreover,
\[
  \FlatPD_{\ell}(T)=\FlatPD_{\ell}(R),
\]
and $T$ is left $k$-perfect if and only if $R$ is left $k$-perfect, for every
$k\ge0$.  The same statements hold on the right.
\end{corollary}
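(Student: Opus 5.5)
The plan is to check that $T=R\ltimes_n(M_1,\dots,M_n)$ is a split-by-nilpotent extension of $R$ in the sense of \cref{def:splitnil}, and then to read off every assertion from the general results of Sections~3 and~4. Concretely, I would take $\iota:R\to T$ to be $r\mapsto(r,0,\dots,0)$ and $\pi:T\to R$ to be the projection onto the degree-zero component. In the $n$-trivial extension the multiplication is graded:
\[
(a_0,\dots,a_n)(b_0,\dots,b_n)=\Bigl(\textstyle\sum_{i+j=k}a_ib_j\Bigr)_{k},
\]
where $M_i\times M_j\to M_{i+j}$ is the given bilinear data and products landing in degree greater than $n$ vanish. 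From this description, $\iota$ and $\pi$ are identity-preserving ring homomorphisms, because the degree-zero component of a product is just $a_0b_0$. Clearly $\pi\iota=\id_R$. The kernel $J=M_1\oplus\cdots\oplus M_n$ is spanned by elements of degree at least $1$, so a product of $n+1$ elements of $J$ has degree at least $n+1$. Hence $J^{n+1}=0$, which is the nilpotence already recorded before the statement.

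With the hypotheses of \cref{def:splitnil} verified, the first claim comes straight from \cref{cor:flat-pd}. For a flat left $T$-module $F$, that corollary gives $\pd_TF=\pd_R(F/JF)$, and $F/JF\cong R\otimes_TF$ by \eqref{eq:quotient}. The equality $\FlatPD_\ell(T)=\FlatPD_\ell(R)$ is \cref{thm:spectrum}, and the equivalence of left $k$-perfectness for every $k\ge0$ is \cref{thm:nperfect-general}.

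For the right-handed statements I would apply the same results to $T^{\mathrm{op}}$ and $R^{\mathrm{op}}$. The maps $\iota$ and $\pi$ remain ring homomorphisms $R^{\mathrm{op}}\to T^{\mathrm{op}}\to R^{\mathrm{op}}$ with composite the identity. Their kernel is $J$ as a set, and it is still nilpotent of index at most $n+1$. So $T^{\mathrm{op}}$ is a split-by-nilpotent extension of $R^{\mathrm{op}}$, and the left results for it are exactly the right results for $T$.

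The only step needing any care is the verification that $\pi$ is multiplicative and $J$ is nilpotent, since this depends on the precise multiplication of the $n$-trivial extension in \cite{AndersonEtAl2017}. The grading is the property being used here, and I expect it to dispose of this step immediately.
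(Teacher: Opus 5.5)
Your proposal is correct and is essentially the paper's proof: the paper notes that the projection $T\twoheadrightarrow R$, with section $r\mapsto(r,0,\dots,0)$, is a split retraction whose kernel $J=M_1\oplus\cdots\oplus M_n$ satisfies $J^{n+1}=0$, and then applies \cref{cor:flat-pd,thm:spectrum,thm:nperfect-general}. Your grading check that $\iota$ and $\pi$ are ring maps with nilpotent kernel, and your passage to $T^{\mathrm{op}}$ for the right-handed case, spell out what the paper leaves implicit.
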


\begin{proof}
Apply \cref{cor:flat-pd,thm:spectrum,thm:nperfect-general} to the canonical split
nilpotent retraction $T\twoheadrightarrow R$.
\end{proof}

\begin{remark}\label{rem:ntrivial-literature}
The $k$-perfect equivalence in \cref{cor:ntrivial} recovers
\cite[Theorem~4.2]{BenkhadraBennisGarcia2020}.  The modulewise equality of
projective dimensions and the equality of the complete spectra
$\FlatPD_{\ell}$ are stronger statements not needed in the categorical proof of
that equivalence.
\end{remark}

\subsection{Truncated polynomial rings}

\begin{corollary}[Truncated polynomial extensions]\label{cor:truncated}
Let $R$ be a ring and let $t\ge2$.  Suppose $x$ is central over $R$ and set
\[
   S=R[x]/(x^t).
\]
Then the evaluation map $x\mapsto0$ makes $S$ a split-by-nilpotent extension of
$R$.  Consequently,
\[
  \FlatPD_{\ell}(S)=\FlatPD_{\ell}(R)
\]
and $S$ is left $n$-perfect if and only if $R$ is left $n$-perfect.  In the
commutative case,
\[
  \cotD(R[x]/(x^t))=\cotD(R).
\]
\end{corollary}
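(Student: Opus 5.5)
The plan is to realize $S=R[x]/(x^t)$ as a split-by-nilpotent extension of $R$ in the sense of \cref{def:splitnil}, and then read off all the conclusions from \cref{thm:spectrum}, \cref{thm:nperfect-general} and, in the commutative case, \cref{prop:cot-splf}.

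First I will set up the maps. Take $\iota:R\to S$ to be the composite $R\hookrightarrow R[x]\twoheadrightarrow R[x]/(x^t)$. This is an identity-preserving ring homomorphism, and it is injective because $S$ is free as a left $R$-module on $1,x,\dots,x^{t-1}$. Take $\pi:S\to R$ to be evaluation at $x=0$. Since $x$ is central, the substitution $x\mapsto 0$ gives a well-defined ring homomorphism $R[x]\to R$, namely $\sum r_ix^i\mapsto r_0$. Its kernel is the two-sided ideal $xR[x]$, which contains $x^t$, so $\pi$ factors through $S$. Clearly $\pi\iota=\id_R$.

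Next I will identify $J=\ker\pi$ and check nilpotence. The kernel is $J=xS$, the image of $xR[x]$. Because $x$ is central, any product of $t$ elements of $J$ has the form $x^t s$ for some $s\in S$, and this is $0$ in $S$. Hence $J^t=0$. This completes the verification of \cref{def:splitnil}.

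Now the conclusions follow directly. The equality $\FlatPD_{\ell}(S)=\FlatPD_{\ell}(R)$ is \cref{thm:spectrum}. The equivalence of left $n$-perfectness for $S$ and $R$ is \cref{thm:nperfect-general}. When $R$ is commutative, $S$ is commutative as well, and \cref{prop:cot-splf} gives $\cotD(S)=\splf(S)=\splf(R)=\cotD(R)$. Alternatively, one can note that $S\cong R\ltimes_{t-1}(R,\dots,R)$ and apply \cref{cor:ntrivial}, but the direct route is cleaner.

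The only step that needs any care is well-definedness of $\pi$ on the quotient together with the nilpotence computation. Both rely on the centrality of $x$: it is what makes substitution a ring homomorphism and makes $J^t=x^tS$. No other step presents an obstacle.
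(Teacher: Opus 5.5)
Your proposal is correct and follows essentially the same route as the paper: both proofs observe that the constant embedding and evaluation at $x=0$ form a ring retraction with nilpotent kernel $(x)/(x^t)$. Both then cite \cref{thm:spectrum}, \cref{thm:nperfect-general} and, in the commutative case, \cref{prop:cot-splf}; your write-up simply makes explicit the routine checks that $\pi\iota=\id_R$ and $J^t=0$, which the paper leaves implicit.
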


\begin{proof}
The kernel $(x)/(x^t)$ is nilpotent and the constant-polynomial embedding is a ring
section.  Apply the general results.
\end{proof}

\subsection{Upper triangular matrix rings}

Let $T_m(R)$ denote the ring of $m\times m$ upper triangular matrices over $R$.
Projection onto the diagonal gives a split epimorphism
\[
  T_m(R)\twoheadrightarrow R^m
\]
whose kernel is the nilpotent ideal of strictly upper triangular matrices.

\begin{corollary}[Upper triangular matrix rings]\label{cor:triangular}
For every ring $R$ and $m\ge1$,
\[
  \FlatPD_{\ell}(T_m(R))=\FlatPD_{\ell}(R^m)=\FlatPD_{\ell}(R).
\]
Since a finite product is left $n$-perfect exactly when each factor is left
$n$-perfect,
\[
  T_m(R)\text{ is left }n\text{-perfect}
  \quad\Longleftrightarrow\quad
  R\text{ is left }n\text{-perfect}.
\]
The right-handed analogue also holds.
\end{corollary}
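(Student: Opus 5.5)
The plan has three steps: verify that $T_m(R)\twoheadrightarrow R^m$ is a split-by-nilpotent extension, apply \cref{thm:spectrum} to it, and then compute the spectrum of the finite product $R^m$.

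\emph{Step 1.} Let $\pi:T_m(R)\to R^m$ send a matrix to its diagonal $(a_{11},\dots,a_{mm})$. The diagonal entries of a product of upper triangular matrices are the products of the diagonal entries, so $\pi$ is a unital ring homomorphism. The diagonal embedding $\iota:R^m\to T_m(R)$ is a unital ring section, so $\pi\iota=\id$. The kernel $J$ consists of the strictly upper triangular matrices, and it satisfies $J^m=0$. Hence \cref{def:splitnil} applies, and \cref{thm:spectrum} gives $\FlatPD_\ell(T_m(R))=\FlatPD_\ell(R^m)$.

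\emph{Step 2 (product rings).} I would show $\FlatPD_\ell(A_1\times\cdots\times A_m)=\bigcup_k\FlatPD_\ell(A_k)$. Put $A=\prod A_k$ and let $e_k$ be the central idempotents. Every left $A$-module decomposes as $X=\bigoplus e_kX$, and $\Mod A\simeq\prod\Mod A_k$. Under this equivalence, projectives correspond to tuples of projectives and flats to tuples of flats. For the flat case, use $N\otimes_AX\cong\bigoplus_k Ne_k\otimes_{A_k}e_kX$. Since projective resolutions decompose componentwise, $\pd_AX=\max_k\pd_{A_k}e_kX$.

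Given flat $A_k$-modules $F_k$, the module $\bigoplus F_k$ is a flat $A$-module of projective dimension $\max_k\pd F_k$. In particular, taking $F_j=0$ for $j\ne k$ realizes every value in $\FlatPD_\ell(A_k)$. Conversely, every flat $A$-module has projective dimension equal to some $\pd_{A_k}e_kX$ with $e_kX$ flat. This proves the claimed formula, and with all $A_k=R$ it gives $\FlatPD_\ell(R^m)=\FlatPD_\ell(R)$.

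\emph{Step 3.} Taking suprema in the two equalities gives $\splf_\ell(T_m(R))=\splf_\ell(R)$, which is the $n$-perfect equivalence. The same computation also justifies the remark that a finite product is left $n$-perfect exactly when each factor is.

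For the right-handed analogue, apply the same argument to right modules via \cref{thm:nperfect-general}. Equivalently, use $T_m(R)^{\mathrm{op}}\cong T_m(R^{\mathrm{op}})$, taking the transpose and reversing the index order, together with the observation that $R^m$ is its own opposite up to replacing $R$ by $R^{\mathrm{op}}$.

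The only step needing care is Step 2. I would verify there that the componentwise decomposition preserves flatness and projective dimension in both directions, and that zero modules contribute nothing to the maximum. The value $0$ lies in each spectrum anyway, since projective modules are flat.
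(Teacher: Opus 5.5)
Your proposal is correct and matches the paper's proof: you apply \cref{thm:spectrum} to the diagonal retraction $T_m(R)\twoheadrightarrow R^m$ and then compute $\FlatPD_{\ell}(R^m)$ componentwise via $\pd_{R^m}(X_1,\dots,X_m)=\max_i\pd_RX_i$. The differences are cosmetic. You realize each value of $\FlatPD_{\ell}(R)$ by a tuple with a single nonzero entry rather than the paper's constant tuple. You also check explicitly that $\pi$ is a ring homomorphism and that $J^m=0$, which the paper leaves implicit.
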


\begin{proof}
Apply \cref{thm:spectrum,thm:nperfect-general} to the diagonal retraction.  A left $R^m$-module is a tuple $(X_1,\dots,X_m)$, flatness is componentwise, and
\[
 \pd_{R^m}(X_1,\dots,X_m)=\max_i\pd_R X_i.
\]
The maximum of finitely many elements of the totally ordered set $\mathbb N\cup\{0,\infty\}$ is one of them, and every value in $\FlatPD_\ell(R)$ is realized by the constant tuple.  Hence $\FlatPD_\ell(R^m)=\FlatPD_\ell(R)$, and the asserted $n$-perfect equivalence follows.
\end{proof}

\begin{remark}\label{rem:gldim}
\Cref{cor:triangular} concerns projective dimensions of flat modules, not global
dimension.  The global dimension of $T_m(R)$ can be strictly larger than that of
$R$.  Thus $n$-perfectness is substantially more stable under nilpotent extensions
than global dimension.
\end{remark}

\section{Examples and sharpness}

\begin{example}[A nonprojective idealizing module]\label{ex:zn}
Let $R=\mathbb Z$ and $M=\mathbb Z/n\mathbb Z$ with $n\ge2$.  The module $M$ is
not projective and has flat dimension one.  Since $\mathbb Z$ is $1$-perfect but
not perfect,
\[
  \cotD(\mathbb Z\ltimes\mathbb Z/n\mathbb Z)=1.
\]
Thus the projectivity hypothesis on the idealizing module is unnecessary even in
the simplest torsion example.
\end{example}

\begin{example}[An idealizing module of infinite flat dimension]\label{ex:arbitraryM}
Let $k$ be a field and
\[
 R=k[\varepsilon]/(\varepsilon^2),
 \qquad M=R/(\varepsilon)\cong k.
\]
The ring $R$ is Artinian, hence perfect.  On the other hand, $M$ has infinite flat
(and projective) dimension.  Indeed, multiplication by $\varepsilon$ gives an
infinite exact free resolution
\[
 \cdots\xrightarrow{\varepsilon}R
 \xrightarrow{\varepsilon}R
 \xrightarrow{\varepsilon}R\longrightarrow k\longrightarrow0.
\]
After tensoring with $k$, every displayed differential becomes zero.  Hence
\[
  \Tor_i^R(k,k)\cong k\quad(i\ge1),
  \qquad \fd_RM=\infty.
\]
Nevertheless, \cref{cor:perfect-arbitrary} gives
\[
 R\ltimes M\quad\text{perfect}.
\]
Thus finite flat dimension of the idealizing module is genuinely unnecessary.
\end{example}

\begin{example}[Arbitrarily large finite flat dimension]\label{ex:poly}
Let $d\ge1$, let $k$ be a field,
\[
  R=k[x_1,\dots,x_d],
  \qquad
  M=R/(x_1,\dots,x_d)\cong k.
\]
The Koszul complex gives $\pd_RM=\fd_RM=d$.  By the polynomial-ring computation
of Bennis and Mahdou \cite[Theorem~3.1]{BennisMahdou2009},
$\cotD(R)=d$.  Therefore
\[
  \cotD(R\ltimes k)=d.
\]
The integer controlling the flat dimension of $M$ can be as large as desired,
while the idealization retains exactly the same cotorsion dimension as $R$.
\end{example}

\begin{example}[Infinite-dimensional square-zero radical]\label{ex:field}
Let $k$ be a field and $V$ an arbitrary $k$-vector space, with no restriction on
its dimension.  Then
\[
  k\ltimes V
\]
is perfect by \cref{cor:perfect-arbitrary}.  When $V$ is infinite-dimensional, the ideal $0\ltimes V$ is not finitely generated, so the ring is non-Noetherian.  This illustrates that perfectness of the
idealization does not rely on Noetherian hypotheses.
\end{example}

\begin{example}[Truncated polynomials over $\mathbb Z$]\label{ex:truncZ}
For every $t\ge2$,
\[
  \mathbb Z[x]/(x^t)
\]
is $1$-perfect but not perfect, because the same is true of $\mathbb Z$ and
\cref{cor:truncated} preserves the exact flat projective-dimension spectrum.
\end{example}

\begin{example}[Triangular matrices]\label{ex:triZ}
For $m\ge2$, the upper triangular matrix ring $T_m(\mathbb Z)$ has global
homological behavior different from $\mathbb Z$, but
\[
  \splf_{\ell}(T_m(\mathbb Z))=1.
\]
Thus it is left $1$-perfect and not left perfect.  The same conclusion holds on the
right.
\end{example}

\begin{example}[Iterated arbitrary idealizations]\label{ex:iter}
Start with a commutative ring $R$ satisfying $\cotD(R)=n$ and form
\[
  R_1=R\ltimes M_0,
  \quad
  R_2=R_1\ltimes M_1,
  \quad\dots,
\]
with completely arbitrary modules $M_i$.  Then every $R_i$ has global cotorsion
dimension $n$.  This produces long families of generally non-isomorphic rings with
identical flat projective-dimension spectra.
\end{example}

\end{document}